\newtheorem{tm}{Theorem}[section]
\newtheorem{rk}{Remark}[section]
\newtheorem{prop}{Proposition}[section]
\newtheorem{lm}{Lemma}[section]
\newtheorem{cor}{Corollary}[section]
\newcommand{\E}{\mathbb E}
\newcommand{\bi}{\mathbf i}
\newcommand{\<}{\langle}
\renewcommand{\>}{\rangle}
\newcommand{\cui}[1]{{\color{red} [cui: #1]}}
\newcommand{\zhou}[1]{{\color{brown} [zhou: #1]}}
\newcommand{\TheTitle}{Optimal control for stochastic nonlinear Schr\"odinger equation on graph} 
\newcommand{\TheAuthors}{Jianbo, Cui and Shu, Liu and Haomin, Zhou}
\title{{\TheTitle}\thanks{}}
\title{{\TheTitle}\thanks{The research is partially supported by Georgia Tech Mathematics Application Portal (GT-MAP) and by research grants NSF  DMS-1830225, and ONR N00014-21-1-2891. The research of the first author is partially supported by the Hong Kong Research Grant Council ECS grant 25302822, start-up funds (P0039016, P0041274) from Hong Kong Polytechnic University and the CAS AMSS-PolyU Joint Laboratory of Applied Mathematics. 
}}
\author{Jianbo Cui 
\thanks{Department of Applied Mathematics, The Hong Kong Polytechnic University, Hung Hom, Kowloon, Hong Kong 
(\email{jianbo.cui@polyu.edu.hk}, corresponding author)
}
\and 
Shu Liu
\thanks{School of Mathematics, Georgia Institute of Technology, Atlanta, GA 30332, USA
(\email{sliu459@gatech.edu})}
\and 
Haomin Zhou
\thanks{School of Mathematics, Georgia Institute of Technology, Atlanta, GA 30332, USA
(\email{hmzhou@gatech.edu}))}
}
\begin{document}

\maketitle

\begin{abstract}
We study the optimal control formulation for stochastic nonlinear Schr\"odinger equation (SNLSE) on a finite graph.
By viewing the SNLSE as a stochastic Wasserstein Hamiltonian flow on density manifold, we show the global existence of a unique strong solution for SNLSE with a linear drift control or a linear diffusion control on graph. Furthermore, we provide the gradient formula, the existence of the optimal control and a description on the optimal condition via the forward and backward stochastic differential equations.
\end{abstract}

\begin{keywords}
optimal control, density manifold, stochastic nonlinear Schr\"odinger equation on graph, Wasserstein Hamiltonian flow.
\end{keywords}

\begin{AMS}
35R02, 30H05, 35Q55, 35Q93,93E20
\end{AMS}

\section{Introduction}

The nonlinear Schr\"odinger equation (NLSE) given in the form of 
\begin{align*}
\hbar \bi \frac {\partial }{\partial t} \Psi(t,x)=-\frac {\hbar^2}2 \Delta \Psi(t,x)+\Psi(t,x)\mathbb V(x)+\Psi(t,x)f(|\Psi(t,x)|^2)
\end{align*}
has wide applications in quantum mechanics, quantum optics, nuclear physics, transport and diffusion phenomena, and Bose-Einstein condensations (see, e.g., \cite{Sch31,SS99,Caz03}). The unknown $\Psi(t,x)$ represents a complex wave function for $x\in\mathbb R^d, \hbar >0$ is the Planck constant, $\mathbb V(\cdot)$ and $f(\cdot)$ are real-valued functions, referred as linear and nonlinear interaction potentials respectively.
Considering the randomness in the propagation of nonlinear dispersive waves, the stochastic nonlinear Schr\"odinger equation (SNLSE)
\begin{align}\label{SNLSE}
\hbar \bi d \Psi(t,x)&=-\frac {\hbar^2}2 \Delta \Psi(t,x)dt+\Psi(t,x)\mathbb V(x)dt+\Psi(t,x)f(|\Psi(t,x)|^2)dt\\\nonumber
&-\bi u(t,x) \mu(x)dt+ u(t,x)  dW(t,x),
\end{align}
 has been introduced and studied in recent years (see, e.g., \cite{BD99,BM14,BHW19,CHS19a,CS21}). 
 Here $W$ is a colored Wiener process (see, e.g. \cite{Dap92}) 
 defined by
 \begin{align*}
 W(t,x)=\sum_{j=1}^{N}\mu_je_j(x)\beta_j(t),\; t\ge 0, \;x\in \mathbb R^d, 
 \end{align*} 
 and 
 \begin{align*}
 \mu(x)=\frac 12 \sum_{j=1}^N |\mu_j|^2|e_j(x)|^2, x\in \mathbb R^d
 \end{align*}
 with $N\in \mathbb N \cup {\infty},$ $\mu_j\in \mathbb C,$ $e_j$ real-valued function and $\beta_j$ independent Brownian motion on a complete filtrated probability space  $(\Omega,\mathcal F, \{\mathcal F_t\}_{t\ge 0},\mathbb P).$
Another physical significance of SNLSE is related to the theory of measurements continuous in time in quantum mechanics and open quantum system (see, e.g., \cite{barchielli2009quantum,MR1348380}).

In this paper, we focus on two types of SNLSEs on a finite graph $G=(V,E,w)$ and their related stochastic control problems. Here $V$ is the vertex set, $E$ is the edge set and $w_{jl}$ is the weight of the edge $(j,l)\in E$ satisfying $\omega_{lj}=\omega_{jl}>0$ if there is an edge between nodes $j$ and $l$, and $\omega_{jl}=0$ otherwise. Throughout this paper, we assume that $G$ is an undirected, connected graph with no self loops or multiple edges. The first type is  the nonlinear Schr\"odinger equation with random perturbation, 
\begin{align}\label{snls-per}
\bi d u_j=(-\frac 12 (\Delta_G u)_j+u_j\mathbb V_j+u_jf_j(|u|^2))dt+\sigma_ju_j\circ dW_t.
\end{align}
Here  $\Delta_G$ is a nonlinear discretization of Laplacian operator on $G$ introduced in \cite{CLZ19} (see \eqref{non-lap} for its formula), $f_j:\mathbb R \rightarrow \mathbb R$ is a continuous real-valued  function, $\mathbb V_j$ is a given linear potential on the node $j$,  $\sigma_j\in  \mathbb R$ represents the diffusion coefficient, and $\{W_t\}_{t\ge 0}$ is one dimensional Brownian motion on $(\Omega,\mathcal F, \{\mathcal F_t\}_{t\ge 0},\mathbb P)$.
The stochastic differential $\circ dW_t$ is understood in the Stratonovich sense. A typical example of the nonlinear function $f_j$ is that $f_j(|u|^2)=\sum_{l=1}^N \mathbb W_{jl} |u_l|^2$ with an interactive potential $\mathbb W_{jl}=\mathbb W_{lj}$ for any $(j,l)\in E.$ We would like to remark that  Eq. \eqref{snls-per} can be viewed as a spatial discretization of Eq. \eqref{SNLSE} when $G$ is a lattice obtained by discretizing a continuous domain (see e.g. \cite{CHL16b}).
Another type is the nonlinear Schr\"odinger equation with white noise dispersion 
\begin{align}\label{snls-whi}
\bi d u_j=-\frac 12 (\Delta_G u)_j\circ dW_t+(u_j\mathbb V_j+u_jf_j(|u|^2))dt.
\end{align}
When $G$ is a lattice,
\eqref{snls-whi} becomes a spatial discretization of NLSE with white noise dispersion  \cite{MR2652190}, which describes the propagation of a signal in an optical fiber with dispersion management. 

Our current investigation is motivated by several reasons. 
Firstly, the Schr\"odinger equation on graph and its control problem have their own interest and applications \cite{MR2898889,MR3103255,Noja12,eilbeck2003discrete}.
Secondly, in contrast to the extensive literature on the optimal control problem and exact controllability of Schr\"odinger equations on continuous domain in both the deterministic and stochastic cases (see, e.g.,\cite{MR3427687,MR2299629,MR3082471,MR3462390,MR3424692,MR3813983}), far fewer results are known when the problem settings are on graphs. One of the main difficulties lies on the weak regularization effect of free Schr\"odinger group and the nonlinear Laplacian operator on graph \cite{CLZ19}. Another one arises from the compact embedding theorem in probability space. Last but not least, both NLSE and SNLSE on a lattice graph can be viewed as a semi-discretization of NLSE and SNLSE on a continuous domain respectively \cite{CHL16b}, hence can be used as numerical schemes to compute (stochastic) optimal control problems involving SNLSEs in practice. However, many challenging questions remain open, such as the preservation of mass, energy, and symplectic structures, and the convergence analysis of semi-discretization of SNLSEs (see, e.g., \cite{CLZ20a} for more discussions). 

Inspired by the optimal control of quantum mechanical system \cite{MR949169,MR2355901}, we shall study an optimal control problem associated with \eqref{snls-per} or \eqref{snls-whi}. 
Formally, we can view their solution  $u=u(j,t,\omega), t\ge 0,\omega\in \Omega,$ as the quantum state or the nonlinear wave at time $t$. The stochastic perturbation may represents an inaccurate measurement via the quantum observation or a dispersion management in optical fiber.  
The optimal control problem considered here is to find an input potential $\mathbb V$ (or a diffusion coefficient $\mathbb \sigma$) such that the state $u(T)$ is as close as possible to a target state $f^1(T)$ and a trajectory $Z^1$, and achieves the  minimum cost (see sections \ref{sec-control} and \ref{sec-condition} for more details). A different viewpoint for this problem is to recover the quantum mechanical potential $\mathbb V$ or a diffusion coefficient $\sigma$ from the observation of the quantum state or the nonlinear wave $u(T)$ at the end of $[0,T]$. Despite many fruitful results on the continuous optimal control problems for NLSE and SNLSE \cite{MR1070712, MR2377449,MR3427687,MR2299629,MR3082471,MR3462390,MR3424692,MR3813983}, a few exist for the problem defined on a graph. To the best of our knowledge, no result has been reported for stochastic control systems with \eqref{snls-per} or \eqref{snls-whi}. 

In this work we study both linear drift and diffusion control. 
Our approach is based on two key ideas. One is used by Nelson in his derivation for NLSE \cite{Nelson19661079}. The other is viewing SNLSE as a stochastic Wasserstein Hamiltonian flow \cite{CLZ21s}.
By using the complex expression $u=\sqrt{\rho}e^{\bi S}$, we obtain the equivalent Madelung systems of SNLSE on graph (see, e.g., \cite{CLZ19,CS21}). Then by exploiting the properties of Madelung systems, we obtain the existence and uniqueness of the strong solution of \eqref{snls-per} or \eqref{snls-whi} when the control $\mathbb V$ or $\sigma$ is admissible. When the graph is taken as a lattice, we prove that the SNLSE on graph with the nonlinear Laplacian operator preserves the stochastic dispersion relationship, while any linear discretization does not. 
Furthermore, for a quadratic (or convex) cost functional, we provide the gradient formula and prove the existence of the optimal control by carefully studying the probability of tail event of \eqref{snls-per} or \eqref{snls-whi}. When $\sigma$ is a constant potential on every node,
we derive the adjoint equation of \eqref{snls-per} or \eqref{snls-whi} which gives a forward-backward stochastic differential equation and characterizes the necessary optimal condition for the optimal control problem on graph.

 Our paper is organized as follows. In section 2, we explain why we consider the nonlinear Laplacian for the stochastic Schr\"odinger
 equation on graph.
  In section 3, we present some useful properties of the stochastic Schr\"odinger
 equation on graph.
 In section 4, we prove the existence and uniqueness result for \eqref{snls-per} or \eqref{snls-whi} with admissible control variables and prove the existence result of the optimal control.
In section 5, we derive the gradient formula and present 
the necessary optimal condition by deriving a forward-backward stochastic differential equation.

\section{Why nonlinear Laplacian for stochastic Schr\"odinger equation on graph?}
 To explain the reason, 
we consider the stochastic linear Schr\"odinger equation
\begin{align}\label{con-linear-snls1}
\bi d u=-\frac 12 \Delta u dt+\sigma u\circ dW_t.
\end{align}
and the white noise dispersion linear Schr\"odinger equation
\begin{align}\label{con-linear-snls2}
\bi d u=-\frac 12 \Delta u \circ dW_t.
\end{align}
One can directly verify that these equations possess the stochastic dispersion relationship  by It\^o's formula.
\begin{lm}\label{sto-dispersion}
Let $\sigma\in \mathbb R$.
Equation \eqref{con-linear-snls1} (or \eqref{con-linear-snls2}) admits infinitely many plane wave solutions given in the form of $u(x,t)=Ae^{\bi(\mathbb K\cdot x-\mu t-\sigma W(t))}$ (or $Ae^{\bi(\mathbb K\cdot x-\mu  W(t))}$) with arbitrary $A\in \mathbb R^+$, any wave number $\mathbb K\in \mathbb R^d$ and frequency $\mu$ satisfying $\mu=\frac 12 {|\mathbb K|^2}$.

\end{lm}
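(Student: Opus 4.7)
The plan is to verify both claims by direct substitution of the candidate ansatz, exploiting the fact that the Stratonovich product obeys the classical chain rule (equivalently, by applying It\^o's formula and tracking the quadratic-variation correction, which will cancel against the Stratonovich-to-It\^o conversion of the multiplicative noise term). Since both equations are linear and homogeneous in $u$, the amplitude $A>0$ is free and the real content of the lemma is the dispersion relation $\mu = \tfrac12 |\mathbb K|^2$.

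For \eqref{con-linear-snls1}, I would factor the ansatz as $u(x,t) = A e^{\bi \mathbb K \cdot x}\, Z(t)$ with $Z(t) = e^{-\bi \mu t - \bi \sigma W(t)}$ and apply It\^o's formula to $Z$ as a smooth function of $(t, W(t))$. This yields $dZ = \bigl(-\bi \mu - \tfrac12 \sigma^2\bigr) Z\, dt - \bi\sigma Z\, dW_t$, and hence $du = \bigl(-\bi\mu - \tfrac12 \sigma^2\bigr) u\, dt - \bi\sigma u\, dW_t$. Spatially, $\Delta u = -|\mathbb K|^2 u$. Rewriting the right-hand side of \eqref{con-linear-snls1} in It\^o form via $\sigma u \circ dW_t = \sigma u\, dW_t - \tfrac{\bi}{2}\sigma^2 u\, dt$, and multiplying the expression for $du$ by $\bi$, the $\sigma^2 u$ corrections cancel on both sides, leaving $\mu u = \tfrac12 |\mathbb K|^2 u$ and thereby forcing $\mu = \tfrac12 |\mathbb K|^2$.

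For \eqref{con-linear-snls2}, the parallel substitution $u = A e^{\bi(\mathbb K \cdot x - \mu W(t))}$ combined with the Stratonovich chain rule gives $du = -\bi\mu u \circ dW_t$, while $\Delta u = -|\mathbb K|^2 u$; the equation then collapses immediately to $\mu u \circ dW_t = \tfrac12 |\mathbb K|^2 u \circ dW_t$, recovering the same dispersion relation.

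There is no substantive obstacle in this verification; the only point requiring attention is the bookkeeping between Stratonovich and It\^o calculus when the noise enters multiplicatively, and working consistently in the Stratonovich framework renders the computation essentially identical to its deterministic counterpart.
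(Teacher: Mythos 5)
Your verification is correct and is exactly the route the paper intends: the paper offers no written proof beyond the remark that the lemma "can be directly verified by It\^o's formula," and your direct substitution of the plane-wave ansatz (with the It\^o--Stratonovich correction $\sigma u \circ dW_t = \sigma u\, dW_t - \tfrac{\bi}{2}\sigma^2 u\, dt$ cancelling as you note) is precisely that verification. Nothing further is needed.
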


From the above result, we see that the stochastic dispersion relationship $\mu=\frac 12 {|\mathbb K|^2}$ coincides with the classical dispersion relationship, and the argument of the plane wave contains all the information of the Wiener process. 
However, such a simple property may become problematic in discrete settings. To illustrate where the trouble is, let us consider a lattice $G$ obtained by discretizing $\mathbb R^d$ or $\mathbb T^d$. Any linear discretizations of \eqref{con-linear-snls1} and \eqref{con-linear-snls2} can be stated
\begin{align}\label{dis-rela1}
  \bi d u_j=-\frac 12 \sum_{l\in N(j)} C_{lj} u_l dt+\sigma u_j\circ dW_t  
\end{align}
and 
\begin{align}\label{dis-rela2}
    \bi d u_j=-\frac 12 \sum_{l\in N(j)} C_{lj} u_l \circ dW_t,
\end{align}
respectively. Here $\{C_{lj}\}_{(l,j)\in E}$ are chosen to approximate the Laplacian operator in \eqref{con-linear-snls1} and \eqref{con-linear-snls2}. 
For simplicity, we assume that every node has the same number of adjacent nodes, and that the weight on each edge is uniformly given by $\Delta x$. We denote the coordinate of the node $j$ by $x_j=j\Delta x.$  
Regardless of how $\{C_{lj}\}_{(l,j)\in E}$ are selected, there are at most a finite discrete stochastic plane waves which satisfy the stochastic dispersion relationship.

\begin{tm}
For any linear discretization of \eqref{con-linear-snls1} and \eqref{con-linear-snls2}, there exist at most a finite number of pairs $(\mu,\mathbb K)$ with $\mu=\frac 12 |\mathbb K|^2$ so that the discrete stochastic plane waves, i.e., $u_j=Ae^{\bi(\mathbb K\cdot x_j-\mu t-\sigma W(t))}$ for \eqref{dis-rela1} (or $Ae^{\bi(\mathbb K\cdot x_j-\mu W(t))}$ for \eqref{dis-rela2}), are the solutions. 
\end{tm}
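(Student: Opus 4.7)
The plan is to substitute the discrete stochastic plane wave directly into \eqref{dis-rela1} or \eqref{dis-rela2} and use the Stratonovich product rule to see that the noise terms cancel identically, reducing the solvability condition to a purely deterministic lattice dispersion relation. The resulting relation is then shown to admit only finitely many admissible pairs $(\mu,\mathbb K)$ by combining an a priori bound on $|\mathbb K|$ with an analytic zero-counting argument.

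For \eqref{dis-rela1} with ansatz $u_j(t)=Ae^{\bi(\mathbb K\cdot x_j-\mu t-\sigma W(t))}$, the Stratonovich chain rule gives $du_j = u_j(-\bi\mu\,dt-\bi\sigma\circ dW_t)$. Multiplying by $\bi$ and matching the right-hand side of \eqref{dis-rela1}, the stochastic term $\sigma u_j\circ dW_t$ appears on both sides and cancels, leaving $\mu u_j = -\tfrac12\sum_{l\in N(j)}C_{lj}u_l$. For \eqref{dis-rela2} the ansatz $u_j=Ae^{\bi(\mathbb K\cdot x_j-\mu W(t))}$ gives $\bi du_j = \mu u_j\circ dW_t$, and both sides of \eqref{dis-rela2} then carry a common factor $\circ dW_t$ that can be stripped off to produce the same identity. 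By the uniform lattice structure---the displacement vectors $\{v_k\}_{k=1}^m$ to the neighbours and associated coefficients $C_k$ being identical at every node---dividing by $u_j\ne 0$ collapses this to a single scalar equation
\[
\mu \;=\; -\frac{1}{2}\sum_{k=1}^m C_k\,e^{\bi\mathbb K\cdot v_k}.
\]

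The right-hand side is a trigonometric polynomial in $\mathbb K$ bounded in modulus by $M:=\tfrac12\sum_k|C_k|$, while the continuous dispersion $\mu=\tfrac12|\mathbb K|^2$ grows quadratically; combining the two forces $|\mathbb K|^2\le 2M$, confining every admissible wave number to the compact ball $\{|\mathbb K|\le\sqrt{2M}\}$. Splitting the compatibility equation $|\mathbb K|^2+\sum_k C_k e^{\bi\mathbb K\cdot v_k}=0$ into real and imaginary parts yields a non-trivial real-analytic system on this compact ball; its common zero set is a real-analytic subvariety of strictly lower dimension than the ambient space, and combined with the strict convexity of $|\mathbb K|^2$ forcing transversal intersection with the bounded discrete-dispersion hypersurface, one concludes the zero set is zero-dimensional, hence by compactness finite.

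The main obstacle I anticipate is precisely this last dimensional reduction step. In one spatial dimension the conclusion is immediate from the principle of isolated zeros of a non-trivial real-analytic function on a compact interval. In higher dimensions, however, a centrally-symmetric discretization (i.e.\ $C_k=C_{-k}$ paired with $v_k=-v_{-k}$) kills the imaginary-part equation identically, so only a single analytic constraint remains; one must then exploit the strict convexity of the paraboloid $\mu=\tfrac12|\mathbb K|^2$ together with the boundedness and oscillation of the trigonometric sum to rule out coincidental positive-dimensional components of the intersection. This transversality argument is the technical heart of the proof.
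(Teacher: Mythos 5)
Your overall route is the same as the paper's: substitute the plane-wave ansatz, observe that the Stratonovich noise terms match on both sides (the paper does this implicitly by equating the $dt$ and $\circ\, dW$ parts), divide by $u_j\neq 0$ using the uniformity of the lattice to obtain the single scalar identity $\mu=\frac12|\mathbb K|^2=$ (trigonometric polynomial in $\mathbb K$), and then confine $|\mathbb K|$ to a compact ball by comparing quadratic growth against the boundedness of the trigonometric sum. Up to that point your argument and the paper's coincide.

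The divergence, and the gap, is in the last step. The paper concludes finiteness by asserting that the imaginary-part equation $\sum_{l}C_{lj}\sin(\mathbb K\cdot(x_l-x_j))=0$ has finitely many zeros; you instead invoke real-analyticity plus a transversality claim, and you correctly observe that for a centrally symmetric stencil the imaginary part vanishes identically, leaving only one real-analytic equation in $d$ variables. That observation is right (and it exposes the same weakness in the paper's own one-line argument), but your proposed repair does not work as stated: strict convexity of the paraboloid $\mu=\frac12|\mathbb K|^2$ does not force the intersection with the graph of a bounded trigonometric polynomial to be zero-dimensional. Concretely, in $d\ge 2$ take all neighbor coefficients equal to $c>0$ on the standard four-neighbor stencil, so the compatibility equation becomes $\frac12|\mathbb K|^2=c\bigl(\cos(K_1\Delta x)+\cos(K_2\Delta x)\bigr)$; the right-hand side is $2c>0$ at $\mathbb K=0$ while the left-hand side vanishes there, and the left-hand side dominates at infinity, so by the intermediate value theorem the zero set meets every ray from the origin and is a compact hypersurface, not a finite set. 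So the ``transversality'' you defer to cannot be established in the stated generality, and the zero-counting step remains open in your write-up (it is only automatic in $d=1$, where isolated zeros of a nontrivial real-analytic function on a compact interval suffice, exactly as you note). You should either restrict to $d=1$, impose conditions on the stencil (e.g.\ sign or consistency conditions ensuring the difference of the two sides is one-signed away from finitely many points, as happens for the standard consistent discretization via $\sin^2 x\le x^2$), or find a genuinely different finiteness mechanism.
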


\begin{proof}
Consider the discrete stochastic plane waves $u_j(t)=Ae^{\bi(\mathbb K \cdot x_j-\mu t -\sigma W(t))}$
for \eqref{con-linear-snls1} and $u_j(t)=Ae^{\bi(\mathbb K \cdot x_j-\mu W(t))}$
for \eqref{con-linear-snls2}.
Substituting them into \eqref{dis-rela1}  and \eqref{dis-rela2}, 
we get 
\begin{align*}
    &\mu Ae^{\bi(\mathbb K \cdot x_j-\mu t -\sigma W(t))}dt 
    = \frac 12 \sum_{l\in N(j)} C_{lj} Ae^{\bi(\mathbb K \cdot x_l-\mu t -\sigma W(t))}dt,
\end{align*}
and 
\begin{align*}
    \mu Ae^{\bi(\mathbb K \cdot x_j-\mu W(t))}\circ dW(t) = \frac 12 \sum_{l\in N(j)} C_{lj} Ae^{\bi(\mathbb K \cdot x_l-\mu W(t))}\circ dW(t),
\end{align*}
respectively. 
If $\mu=\frac 12 {|\mathbb K|^2},$ we obtain 
\begin{align*}
    \mu=\frac {|\mathbb K|^2}2= \frac 12 \sum_{l\in N(j)} C_{lj} e^{\bi (\mathbb K \cdot (x_l-x_j))}.
\end{align*}
Since 
 $\frac {|\mathbb K|^2}2$ is quadratic in $\mathbb K$ while the trigonometric polynomial on the right hand side is periodic and bounded in $K$, they intersect only in a bounded ball of the complex domain $|\mathbb K|\le C_N$. Besides, it can be seen that the imaginary part of $\frac 12 \sum_{l\in N(j)} C_{lj} \sin(\mathbb K \cdot (x_l-x_j))=0$ has at most finite zero point. Thus, we complete the proof.
 



\end{proof}

To numerically preserve the stochastic dispersion relationship for any pair of $(\mu,\mathbb K)$ with $\mu=\frac 12 |\mathbb K|^2$, we decide to use the nonlinear Laplacian operator $\Delta_G$ constructed by using the Madelung transformation as shown in \cite{CLZ19,CLZ20a}.

\section{Stochastic nonlinear Schr\"odinger equation on graph}
Consider a graph $G=(V,E,\omega)$,
let us denote the set of discrete probabilities on the graph by
$$\mathcal P(G)=\{(\rho)_{j=1}^N\ :\, \sum_{j=1}^N\rho_j =1, \rho_j\ge 0,\; \text{for} \; j\in V\},$$ and $\mathcal P_o(G)$ as its interior (i.e., all  $\rho_j> 0$, for $j\in V$).
$\mathbb V_j$ is a linear potential on each node $j$, and $\mathbb W_{jl}=\mathbb W_{lj}$ is an 
interactive potential between nodes $j$ and $l$.
We denote $N(i)=\{j\in V: (i,j) \in E\}$ the adjacency set of the node $a_i$ and 
$\theta_{ij}(\rho)$ a density dependent weight on the edge $(i,j)\in E$. More precisely, $\theta$ is defined by $\theta_{ij}(\rho)=\Theta(\rho_i,\rho_j),$ where $\Theta$ is a continuous differentiable function on $(0,1)^2$ satisfying 
$\Theta(x,y)=\Theta(y,x)$, $\Theta(x,y)\ge 0$, and $\min(x,y) \le \Theta(x,y)\le \max(x,y)$
for any $x,y\in (0,1)$. For example, we may take $\theta(\rho)$ as the averaged probability weight in \cite{CLZ19}, i.e., $\Theta(x,y)=\frac 12 {(x+y)}$, or the logarithmic probability  weight in \cite{CLZ20a}, i.e., $\Theta(x,y)=\int_0^1 x^{1-t}y^tdt$, or the harmonic probability  weight in \cite{Mas11}, i.e., $\Theta(x,y)=\frac {2}{1/x + 1/y}$.    

In this section, we present the stochastic nonlinear Schr\"odinger equations on graph via the viewpoint of stochastic variational principle proposed in \cite{CLZ21s}.
Define the total linear potential function $\mathcal V$, interaction potential function $\mathcal W$, and the entropy function $L$ by
$$\mathcal V(\rho)=\sum_{i=1}^N\mathbb V_i\rho_i,\,\,
\mathcal W(\rho)=\frac 12\sum_{i,j=1}^N\mathbb W_{ij}\rho_i\rho_j, \;\; L(\rho)=  \sum_{i=1}^N (\log(\rho_i)\rho_i-\rho_i).$$ 
$I(\rho)$ is 
the discrete Fisher information on graph, i.e.,   
\begin{equation}\label{DiscFisher}
I(\rho)=\frac 12\sum_{i=1}^N\sum_{j\in N(i)}\widetilde \omega_{ij}|\log(\rho_i)-\log(\rho_j)|^2\widetilde \theta_{ij}(\rho),
\end{equation}
where $(\widetilde \omega,\widetilde \theta)$ is another pair of weight and
density dependent weight on the edges $G$. We remark that $(\widetilde \omega,\widetilde \theta)$ may be selected the same as or differently from $(\omega, \theta)$.
Throughout this paper, we take $\theta$ as the averaged probability weight,  $\widetilde \theta$ as the logarithmic probability weight, 
and $\omega_{ij}=\widetilde \omega_{ij}$
for simplicity.

As given in \cite{CLZ2022}, the stochastic variational principe on graph is defined as
\begin{align}\label{gen-var-pri}
      \mathcal I(\rho^0,\rho^T)=\inf\{\mathcal S(\rho_t,\Phi_t)| (-\Delta_{\rho_t})^{\dagger}\Phi_t \in \mathcal T_{\rho_t} \mathcal P_{o}(\mathcal M),\rho(0)=\rho^0, \rho(T)=\rho^T\},
 \end{align}
whose action functional is expressed in the dual coordinates, 
\begin{align*}
\mathcal S(\rho_t,\Phi_t)&=-\<\rho(0),\Phi(0)\>+\<\rho(T),\Phi(T)\>-\int_0^T \<\partial_t \Phi(t),\rho_t\>-\mathcal H_0(\rho_t, \Phi_t) dt\\  &-\int_0^T \mathcal H_1(\rho_t,\Phi_t)\circ d W_t.
\end{align*}
Here $(-\Delta_{\rho})^\dagger$ is the pseudo inverse of $div_G^{\theta}(\rho \nabla_G(\cdot))$ defined by
$$\Big(div_G^{\theta}(\rho\nabla_G(\cdot))\Big)_i:=\sum_{j\in N(i)}\theta_{ij}(\rho)\omega_{ij}(S_j-S_i)$$
for any potential function $S=\{S_i\}_{i\in V}.$
The vector field $\nabla_G S$ induced by $S$ is defined by 
$\nabla_G(S):=\Big(\sqrt{\omega_{ij}}(S_{i}-S_j)\Big)_{ij\in E}.$
With the above notation, one can also introduce the inner product for the vector fields on graph defined by 
$$\<u,v\>_{\theta(\rho)}:=\frac 12\sum_{ij\in E} u_{ij}v_{ij}\theta_{ij}(\rho)\omega_{ij},$$
for any two vector fields (skew-symmetric matrices) $u,v$. 
The kinetic energy is defined by $K(S,\rho)=\frac 12\<\nabla_G S, \nabla_G S\>_{\theta(\rho)}.$
Here $\rho^0,\rho^T$ are $\mathcal F_0$ and $\mathcal F_T$ measurable functions, 
the dominated energy $\mathcal H_0$ and perturbed energy $\mathcal H_1$ are given by 
\begin{align*}
&\mathcal H_0(\rho,S)=K(S,\rho)+ F(\rho)-\kappa L(\rho),\;\\
&\mathcal H_1(\rho,S)=\eta_1 K(S,\rho)+\eta_2 I(\rho)+\eta_3 \Sigma (\rho)+\eta_4\mathcal W(\rho)+\eta_5 L(\rho)
\end{align*}
with  $\kappa\in \mathbb R$, $\Sigma$ defined by $\Sigma(\rho)=\sum_{j=1}^N\sigma_j \rho_j$ for some $\sigma_j\in \mathbb R$, and $ F(\rho):=\frac 18 I(\rho)+\mathcal V(\rho)+\mathcal W(\rho)$.
In particular, when $\eta_1=0$, \eqref{gen-var-pri} recovers the classical variational problem with random potential in Lagrangian formalism.

By finding the critical point of the stochastic variational principle \eqref{gen-var-pri}, we achieve the following discrete stochastic Wasserstein Hamiltonian flow on the density manifold,
\begin{equation}\label{sdhs}\begin{split}
&d\rho=\frac {\partial}{\partial S}\mathcal H_0(\rho,S)+\frac {\partial}{\partial S}\mathcal H_1(\rho,S)\circ dW_t,\\
&d S=-\frac {\partial}{\partial \rho}\mathcal H_0(\rho,S)-\frac {\partial}{\partial \rho}\mathcal H_1(\rho,S)\circ dW_t,
\end{split}\end{equation}

Selecting different deterministic energy $\mathcal H_0$ and perturbed energy $\mathcal H_1$ results in various forms of stochastic nonlinear Schr\"odinger equations on graph.
When $\mathcal H_0(\rho,S)$ $=K(S,\rho)+\mathcal F(\rho)-\kappa L(\rho)$, $\mathcal H_1(\rho,S)=\Sigma(\rho),$ the Wasserstein Hamiltonian flow becomes 
\begin{equation}\label{snls1}\begin{split}
& d\rho_i+\sum_{j\in N(i)}\omega_{ij}(S_j-S_i)\theta_{ij}(\rho)=0,\\
& {d S_i}+\frac 12\sum_{j\in N(i)}\omega_{ij}(S_i-S_j)^2 \frac {\partial \theta_{ij}(\rho)}{\partial \rho_i}dt+\frac 18 \frac {\partial I(\rho)}{\partial \rho_i}dt+\mathbb V_idt\\
&+\sum_{j=1}^N\mathbb W_{ij}\rho_jdt-\kappa \log (\rho_i)dt+\sigma_i dW_t=0.
\end{split}\end{equation}
Its complex formulation $u(t)=\sqrt{\rho(t)}e^{\bi S(t)}$ gives the stochastic nonlinear Schr\"odinger on graph,
\begin{align}\label{snls-per-com}
\bi d u_j=(-\frac 12 (\Delta_G u)_j+u_j\mathbb V_j+u_j\sum_{l=1}^N\mathbb W_{jl} |u_l|^2-u_j\kappa \log(|u_j|^2))dt+\sigma_ju_j\circ dW_t.
\end{align}
Here the nonlinear Laplacian on the graph is defined by
\begin{align}\label{non-lap}
(\Delta_G u)_j&=-u_j\Big(\frac 1{|u_j|^2}\Big[\sum_{l\in N(j)} \omega_{jl}(\Im(\log(u_j))-\Im(\log(u_l))\theta_{jl}) \\\nonumber
&+\sum_{l\in N(j)}\widetilde \omega_{jl} \widetilde \theta_{jl}(\Re(\log(u_j))-\Re(\log(u_l)) \Big] \\\nonumber
&+\sum_{l\in N(j)} \omega_{jl} \frac {\partial \theta_{jl}}{\partial \rho_j}|\Im(\log(u_j)-\log(u_l))|^2 \\\nonumber
&+\sum_{l\in N(j)}\widetilde \omega_{jl} \frac {\partial \widetilde \theta_{jl}}{\partial \rho_j}|\Re(\log(u_j)-\log(u_l))|^2
\Big),
\end{align}
where $\Re$ and $\Im$ are real and imaginary parts of a complex number. This is precisely the nonlinear graph Laplacian introduced in \cite{CLZ20a}.

When $\mathcal H_0=\mathcal V(\rho)+\mathcal W(\rho)$, $\mathcal H_1=K(\rho,S)+\frac 18 I(\rho)$,  the Wasserstein Hamiltonian flow  becomes 
 \begin{align}\label{snls2}
& d\rho_i=\sum_{j\in N(i)}\omega_{ij}(S_i-S_j)\theta_{ij}(\rho) \circ dW_t;\\\nonumber
& d S_i+(\frac 12\sum_{j\in N(i)}\omega_{ij}(S_i-S_j)^2 \frac {\partial \theta_{ij}}{\partial \rho_i}+\frac 18 \frac {\partial}{\partial \rho_i} I(\rho))\circ dW_t+(\mathbb V_i
+\sum_{j=1}^N\mathbb W_{ij}\rho_j)dt=0,
\end{align} 
whose complex formulation $u(t)=\sqrt{\rho(t)}e^{\bi S(t)}$ satisfies  the nonlinear Schr\"odinger equations with white noise dispersion on graph,
\begin{align}\label{snls-whi-com}
\bi d u_j=-\frac 12 (\Delta_G u)_j\circ dW_t+(u_j\mathbb V_j+u_j\sum_{l=1}^N\mathbb W_{jl} |u_l|^2)dt.
\end{align}
Both \eqref{snls-per-com} and \eqref{snls-whi-com} can be viewed as spatial discretization of \eqref{snls-per} and \eqref{snls-whi} respectively when $G$ is a lattice graph.

Recall that in \cite{CLZ19,CLZ20a}, the global solution in deterministic case ($\eta_1=\cdots=\eta_4=\eta_5=0$, $\kappa=0$) is obtained by using the energy conservation law if $\mathcal F(\rho)$ contains the Fisher information $\beta I(\rho), \beta>0$.
In the stochastic case, the existence of global solution has been studied in \cite{CLZ2022} by using the Poisson bracket $\{\cdot, \cdot\}$. 
In particular, when $\{\mathcal H_0,\mathcal H_1\}=0$, for example $\mathcal H_0$ is a multiple of $\mathcal H_1$, then $\mathcal H_0$ is an invariant of the stochastic Wasserstein Hamiltonian flow. 
Here we summarize some fundamental properties shared by the stochastic nonlinear Schr\"odinger equations on graph.

\begin{prop}\label{prop-snls}
Let  $T>0,$ $u(0)$ be $\mathcal F_0$-measurable with any finite moment and $u_j(0)\neq 0$ for all $j\in V$. 
Then \eqref{snls-per-com} (or \eqref{snls-whi-com}) has a unique strong solution  $u(t)$ on $[0,T].$ Moreover, $u(t)$ satisfies the following properties 
\begin{itemize}
\item[(i)] It conserves the total mass
\begin{align*}
\sum_{j=1}^N |u_j(t)|^2=1, \; \text{a.s.}\;;
\end{align*}
\item[(ii)] The total energy satisfies 
\begin{align*}
\E \Big[\sup_{t\in [0,T]} \mathcal E^p(u(t))\Big]\le C(\mathcal E(u(0)), T, p),
\end{align*}
where $\mathcal E$ is defined by a combination of the discrete kinetic energy $\mathcal E_{kin},$ linear potential $\mathcal E_{lin}$, 
interaction potential $\mathcal E_{int}$ and entropy $\mathcal E_{ent},$ i.e.
\begin{align*}
\mathcal E(u)= \mathcal E_{kin}(u)+\mathcal E_{lin}(u)+\mathcal E_{int}(u)+\mathcal E_{ent}(u).
\end{align*}
Here we have 
\begin{align*}
&\mathcal E_{kin}(u)= \frac 14 \sum_{(j,l) \in E}\{|\Re(\log u_j-\log(u_l))|^2\omega_{jl}\theta_{jl}(|u|^2)\\
&+|\Im(\log u_j-\log(u_l))|^2\widetilde \omega_{jl}\theta_{jl}(|u|^2)\},\\
&\mathcal E_{lin}(u)=\sum_{j=1}^N \mathbb V_j|u_j|^2, \; \mathcal E_{int}(u)=\frac 12\sum_{j,l=1}^N\mathbb W_{jl} |u_j|^2|u_l|^2,\\
 &\mathcal E_{ent}(u)=-\kappa \sum_{j=1}^N (\log(|u_j|^2)|u_j|^2-|u_j|^2).
\end{align*}
\item[(iii)] It is time transverse invariant when $\mathbb V$ is independent of time: if $u^{\alpha}(t)$ is the solution of \eqref{snls-per-com} (or \eqref{snls-whi-com}), where $\mathbb V^{\alpha}=(\mathbb V_j+\alpha)_{j=1}^N$ with $\alpha$ being a constant $\mathcal F_0$-measurable random variable, then 
\begin{align*}
u^{\alpha}(t)=u(t)e^{\bi \alpha t},
\end{align*} 
is also a solution.
\item[(iv)] It is time reversible when $\mathbb V$ is independent of time in the following sense: for \eqref{snls-per-com} (or \eqref{snls-whi-com}) with $\widetilde W(t)= W(t), t\ge 0$ and $\widetilde W(t)=-W(-t), t<0$, then
\begin{align*}
u(t)=\bar u(-t).
\end{align*}
\end{itemize}
\end{prop}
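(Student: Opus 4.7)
The plan is to work on the Madelung side throughout: the change of variables $u=\sqrt{\rho}\,e^{\bi S}$ identifies strong solutions of \eqref{snls-per-com} and \eqref{snls-whi-com} (with nonvanishing components) with strong solutions of the stochastic Hamiltonian systems \eqref{snls1} and \eqref{snls2} on $\mathcal{P}_o(G)\times\mathbb{R}^N$. On the interior density manifold, the Hamiltonian vector fields are smooth, so standard SDE theory gives a unique local strong solution up to an explosion time $\tau$ at which either $\min_j\rho_j(t)\to 0$ or $|S(t)|\to\infty$. Global existence then reduces to excluding both possibilities up to time $T$, which is supplied by (ii).

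Mass conservation (i) is immediate: the $\rho$-equation in each system is of the form of a discrete continuity equation whose right-hand side is a pure discrete divergence $\operatorname{div}_G^\theta(\rho\nabla_G S)$; summing over $i\in V$ and exploiting the antisymmetry of $(S_j-S_i)\theta_{ij}(\rho)$ along each edge gives $d\sum_i\rho_i=0$, so $\sum_j|u_j(t)|^2=\sum_i\rho_i(t)\equiv 1$ almost surely on $[0,\tau)$.

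The crux is the energy estimate (ii). I would apply It\^o's formula to $\mathcal{H}_0(\rho(t),S(t))$ along the Hamiltonian flow \eqref{sdhs}: the deterministic drift vanishes by the antisymmetry $\{\mathcal{H}_0,\mathcal{H}_0\}=0$, the Stratonovich piece reduces to $\{\mathcal{H}_0,\mathcal{H}_1\}\circ dW_t$, and the Stratonovich-to-It\^o conversion produces a correction $\tfrac12\{\{\mathcal{H}_0,\mathcal{H}_1\},\mathcal{H}_1\}\,dt$. Using the structural observation of \cite{CLZ2022} that these iterated brackets grow at most linearly in $\mathcal{H}_0$, taking $p$-th moments and applying the Burkholder-Davis-Gundy inequality to the martingale part followed by Gronwall yields
\begin{align*}
\mathbb{E}\bigl[\sup_{t\le T\wedge\tau}(\mathcal{H}_0(\rho(t),S(t))+C)^p\bigr]\le C(\mathcal{H}_0(\rho(0),S(0)),T,p),
\end{align*}
which is exactly (ii) since $\mathcal{E}(u)$ coincides with $\mathcal{H}_0(\rho,S)$ under $u=\sqrt{\rho}e^{\bi S}$ (with $\mathcal{E}_{kin}$ recovering $K(\rho,S)+\tfrac18 I(\rho)$). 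The almost-sure finiteness of $\sup_{t\le T}I(\rho(t))$ then forces $\inf_{t\le T}\min_j\rho_j(t)>0$ via the discrete lower-bound estimate of \cite{CLZ19,CLZ20a}, while finiteness of the kinetic energy together with the mass constraint $\sum_i\rho_i=1$ pins $S$ modulo a constant; together these preclude both boundary blow-up and $|S|\to\infty$, giving $\tau>T$ almost surely.

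Properties (iii) and (iv) are gauge/symmetry arguments. For (iii), the gauge transform $u^\alpha:=ue^{\bi\alpha t}$ corresponds in Madelung variables to $(\rho,\,S+\alpha t)$: the $\rho$-equation is unchanged because only differences $S_j-S_i$ enter, while the $S$-equation gains an extra $\alpha\,dt$ that is absorbed exactly into the shift $\mathbb{V}_j\mapsto\mathbb{V}_j+\alpha$. For (iv), $v(t):=\overline{u(-t)}$ corresponds to $(\rho(-t),-S(-t))$; conjugation flips one sign and $t\mapsto -t$ flips another so the deterministic parts are preserved, and the definition $\widetilde W(t)=-W(-t)$ is precisely what makes the Stratonovich noise line up. The main obstacle is the It\^o energy identity in (ii): the density-dependent weights $\theta(\rho),\widetilde\theta(\rho)$ inside $\Delta_G$ produce intricate contributions to $\{\mathcal{H}_0,\mathcal{H}_1\}$ and its iterated bracket, and one must exploit either $\mathcal{H}_1=\Sigma(\rho)$ depending only on $\rho$ (linear-noise case, so one of the two terms in the bracket vanishes) or $\mathcal{H}_0=\mathcal{V}+\mathcal{W}$ bounded on $\mathcal{P}(G)$ (white-noise dispersion case) to keep the growth linear in $\mathcal{H}_0$ and close the Gronwall loop.
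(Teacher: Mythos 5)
Your proposal follows essentially the same route as the paper: the paper's own proof just passes to the Madelung variables $(\rho,S)$, invokes the arguments of [CLZ2022, Section 4] for global existence (It\^o's formula on the energy, moment bounds via Burkholder--Davis--Gundy and Gronwall, and the Fisher-information blow-up at $\partial\mathcal P(G)$ to keep $\rho$ in the interior), and checks (i)--(iv) as in [CLZ21s, Proposition 2.1] --- which is exactly the argument you spell out. One point to fix: for the white-noise-dispersion equation \eqref{snls-whi-com} the identification $\mathcal E=\mathcal H_0$ fails, since there $\mathcal H_0=\mathcal V+\mathcal W$ carries no kinetic or Fisher-information term; It\^o's formula must instead be applied to $\mathcal H_1=K+\tfrac18 I$ (equivalently to $\mathcal H_0+\mathcal H_1$), whose evolution $d\mathcal H_1=\{\mathcal H_1,\mathcal H_0\}\,dt$ has no martingale part and closes even more easily, after which the density lower bound and properties (i)--(iv) go through as you describe.
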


\begin{proof}
We can show the existence and uniqueness of $u$ by its complex representation for \eqref{snls-whi-com} and \eqref{snls-per-com}. Thanks to the complex formulation, we know that there always exists $(\rho(0),S(0))$ such that $u(0)=\sqrt{\rho(0)}e^{\bi S(0)}$ with  $|u_i(0)|^2=\rho_i(0)$ such that $\rho_i>0$ for some $i\in V$. The potential $S(0)$ in representation  $(\rho(0),S(0))$ is unique 
up to a shift with $2\pi.$ Let us fix and choose a potential $S(0)$.
Thus to prove the global existence of a unique solution $u$, it suffices to prove that the equivalent systems \eqref{snls1} (or \eqref{snls2}) have a unique global solution. 
To this end, we can use the arguments in \cite[Section 4]{CLZ2022}  and obtain the global existence of the solution. 
The steps to check properties (i)-(iv) are similar to those to prove [Proposition 2.1]\cite{CLZ21s}.

\end{proof}

Following the proof of  \cite[Theorem 4.1]{CLZ2022}, one can also obtain the lower bounds for the density trajectories as stated in the next corollary.
\begin{cor}\label{cor-low}
Let the conditions of Proposition \ref{prop-snls} hold. For Eq.  \eqref{snls1}, there exists a positive random variable which is a lower bound of the density trajectory. 
For Eq. \eqref{snls2}, there exists a positive constant which is a lower bound of the density trajectory. 
\end{cor}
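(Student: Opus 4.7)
The plan is to reduce both statements to a control of the discrete Fisher information $I(\rho(t))$ and then import the Fisher-to-density conversion used in \cite[Theorem~4.1]{CLZ2022}. Concretely, on the finite connected graph $G$, total mass conservation from Proposition~\ref{prop-snls}(i) forces some $\rho_{i_0}(t)\ge 1/N$, and a telescoping-path argument along the edges of $G$, using the non-degeneracy of $\widetilde\theta$, shows that any bound $\sup_{t\in[0,T]} I(\rho(t))\le M$ produces an explicit positive lower bound $L(M)>0$ for $\min_{j\in V}\rho_j(t)$. The remaining question is whether $M$ can be chosen deterministic or merely finite almost surely.

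For \eqref{snls1}, I would appeal directly to Proposition~\ref{prop-snls}(ii). In Madelung coordinates $u=\sqrt{\rho}e^{\bi S}$, the identity $\Re(\log u_j-\log u_l)=\tfrac12(\log\rho_j-\log\rho_l)$ shows that $\mathcal{E}_{kin}(u)$ controls a positive multiple of $I(\rho)$. The moment bound $\E[\sup_{t\in[0,T]}\mathcal{E}^p(u(t))]<\infty$ therefore gives $\sup_{t\in[0,T]}I(\rho(t))<\infty$ almost surely, and the previous step yields the asserted random positive lower bound $L(\omega)$.

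For \eqref{snls2} I aim for a deterministic bound by exploiting the specific structure $\mathcal{H}_1 = K(\rho,S)+\tfrac18 I(\rho)$. Applying the Stratonovich chain rule and using $\{\mathcal{H}_1,\mathcal{H}_1\}=0$ together with the fact that $\mathcal{H}_0=\mathcal{V}+\mathcal{W}$ depends on $\rho$ alone, one finds that $d\mathcal{H}_1$ has no martingale part and
\begin{equation*}
\frac{d}{dt}\mathcal{H}_1(\rho(t),S(t)) = -\sum_{i\in V}\frac{\partial K}{\partial S_i}\,\frac{\partial(\mathcal{V}+\mathcal{W})}{\partial\rho_i}.
\end{equation*}
Since $\rho(t)\in\mathcal{P}(G)$ forces $|\partial_{\rho_i}(\mathcal{V}+\mathcal{W})|\le C(\mathbb{V},\mathbb{W})$, Cauchy--Schwarz in $\<\cdot,\cdot\>_{\theta(\rho)}$ gives $|d\mathcal{H}_1/dt|\le C\sqrt{K(\rho,S)}\le C\sqrt{\mathcal{H}_1}$. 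A one-line Gronwall argument on $\sqrt{\mathcal{H}_1}$ produces the deterministic bound $\mathcal{H}_1(t)\le C(T,\mathcal{H}_1(0))$ on $[0,T]$, hence a deterministic bound on $I(\rho(t))$.

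The step I expect to require the most care is making the formal Stratonovich computation for $d\mathcal{H}_1$ rigorous, since its derivatives are singular on $\partial\mathcal{P}(G)$. I would handle this by standard localization: set $\tau_n=\inf\{t:\min_j\rho_j(t)\le 1/n\}$, perform the computation on $[0,t\wedge\tau_n]$, obtain the Gronwall bound uniformly in $n$, and then conclude $\tau_n\nearrow T$ from the resulting uniform bound on $I(\rho)$. The same localization, combined with Burkholder's inequality applied to the stochastic integral in $d\mathcal{H}_0$, secures the almost sure bound needed for \eqref{snls1}.
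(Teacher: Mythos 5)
Your proposal is correct and is essentially the paper's own argument: the paper proves this corollary by deferring to \cite[Theorem 4.1]{CLZ2022}, and the mechanism you reconstruct --- controlling $\sup_{t\in[0,T]}I(\rho(t))$ through the (near-)conservation of the Hamiltonian containing the Fisher information, then converting a Fisher bound into a pointwise density lower bound via mass conservation and connectivity of $G$ --- is exactly what the paper invokes later in \eqref{low-est} and in the proof of Proposition \ref{uni-est}. In particular, your observation that for \eqref{snls2} the quantity $K+\tfrac18 I$ evolves by a pathwise ODE with no martingale part (yielding a bound determined by the initial data), whereas for \eqref{snls1} one only obtains an almost surely finite random bound, is precisely the reason the corollary distinguishes a positive constant from a positive random variable.
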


To end this section, we demonstrate that the nonlinear discretization of \eqref{con-linear-snls1} and \eqref{con-linear-snls2} can preserve exactly the stochastic dispersion relationship.
Consider the graph version of \eqref{con-linear-snls1},
\begin{align}\label{linear-snls1}
\bi d u_j= -\frac 12 (\Delta_G u)_j dt+\sigma u_j\circ dW_t.
\end{align}
and that of \eqref{con-linear-snls2},
\begin{align}\label{linear-snls2}
\bi d u_j=-\frac 12 (\Delta_G u)_j\circ dW_t.
\end{align}

\begin{prop} 
Given a lattice graph $G$ with $|x_j-x_l|=\Delta x$ for $l\in N(j)$, $\omega_{ij}=(\frac {\partial \theta_{ij}}{\partial \rho_i}\mathcal N \delta x^2)^{-1}$ where $\mathcal N$ is total number of nodes in $N(j)$ and $\theta_{ij}$ is the symmetric  probability weight. 
The nonlinear discretizations of  \eqref{linear-snls1} and \eqref{linear-snls2} preserve the stochastic dispersion relationship. 
\end{prop}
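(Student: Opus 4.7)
The strategy is to plug a plane-wave ansatz directly into the discretized equations \eqref{linear-snls1} and \eqref{linear-snls2} and verify that the dispersion relation $\mu=\tfrac12|\mathbb K|^2$ of Lemma~\ref{sto-dispersion} survives the discretization, thanks to the special structure of $\Delta_G$.

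First I would evaluate $\Delta_G u$ from \eqref{non-lap} on the trial solution $u_j(t)=Ae^{\bi(\mathbb K\cdot x_j-\mu t-\sigma W(t))}$ for \eqref{linear-snls1} (and on the analogous ansatz $u_j(t)=Ae^{\bi(\mathbb K\cdot x_j-\mu W(t))}$ for \eqref{linear-snls2}). The key observation is that $|u_j(t)|^2=A^2$ is independent of both $j$ and $t$, so the associated density $\rho_j(t)=A^2$ is spatially uniform. Hence $\Re(\log u_j)-\Re(\log u_l)=\tfrac12(\log\rho_j-\log\rho_l)=0$ on every edge, which kills the two summands of \eqref{non-lap} involving real-part differences. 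For the symmetric probability weight one has $\theta_{jl}(\rho)=\Theta(A^2,A^2)=A^2$, so the remaining imaginary-part summand reduces to $A^2\sum_{l\in N(j)}\omega_{jl}\,\mathbb K\cdot(x_j-x_l)$, which vanishes by the lattice symmetry $\sum_{l\in N(j)}(x_j-x_l)=0$.

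Second, only the third summand of \eqref{non-lap} survives, and I would identify it as an eigenvalue by computing
\[
(\Delta_G u)_j=-u_j\sum_{l\in N(j)}\omega_{jl}\frac{\partial\theta_{jl}}{\partial\rho_j}\bigl|\mathbb K\cdot(x_j-x_l)\bigr|^2.
\]
Using $\partial\theta_{jl}/\partial\rho_j=\tfrac12$ for the averaged probability weight together with the prescribed normalization $\omega_{jl}=(\frac{\partial\theta_{jl}}{\partial\rho_j}\mathcal N(\Delta x)^2)^{-1}$, the sum collapses to $\mathcal N^{-1}\sum_{l\in N(j)}|\mathbb K\cdot\hat e_{jl}|^2$ with $\hat e_{jl}=(x_j-x_l)/|x_j-x_l|$, and the cubic-lattice neighbor sum evaluates to $|\mathbb K|^2$. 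Therefore $(\Delta_G u)_j=-|\mathbb K|^2u_j$ for every node~$j$.

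Third, I would differentiate the ansatz in Stratonovich form and match coefficients. For \eqref{linear-snls1} this gives $\bi\,du_j=\mu u_j\,dt+\sigma u_j\circ dW_t$, which agrees with $-\tfrac12(\Delta_G u)_j\,dt+\sigma u_j\circ dW_t$ precisely when $\mu=\tfrac12|\mathbb K|^2$; for \eqref{linear-snls2} one similarly obtains $\bi\,du_j=\mu u_j\circ dW_t=-\tfrac12(\Delta_G u)_j\circ dW_t$, again characterizing $\mu=\tfrac12|\mathbb K|^2$. The main obstacle is the algebraic cancellation inside $\Delta_G$: because \eqref{non-lap} mixes logarithmic density terms, first-order phase differences and squared phase differences, one must isolate each piece and use both the constancy of $\rho_j$ (to annihilate the logarithmic real-part contributions) and the lattice symmetry (to annihilate the linear-in-$\mathbb K$ phase sum) before the surviving Fisher-type term can be read off as the exact continuous eigenvalue $|\mathbb K|^2$. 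Once these cancellations are in place, the Stratonovich chain rule handles the stochastic part routinely.
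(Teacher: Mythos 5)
Your proposal is correct and follows essentially the same route as the paper: the paper passes to Madelung coordinates $(\rho,S)$ and checks that constant density kills the Fisher-information term while the weight normalization makes the quadratic phase sum equal $\tfrac12|\mathbb K|^2=\mu$, which is exactly your computation read off term by term from \eqref{non-lap} in the complex formulation. If anything you are slightly more explicit than the paper in noting that the linear-in-$\mathbb K$ neighbor sum (the continuity-equation contribution) vanishes by the lattice symmetry $\sum_{l\in N(j)}(x_j-x_l)=0$, a step the paper leaves implicit.
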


\begin{proof}
The discrete stochastic plane waves read
$u_j(t)=Ae^{\bi(\mathbb K \cdot x_j-\mu t -\sigma W(t))}$
for \eqref{con-linear-snls1} and $u_j(t)=Ae^{\bi(\mathbb K \cdot x_j-\mu W(t))}$
for \eqref{con-linear-snls2} with $\mu=\frac 12|\mathbb K|^2$.
By the Madelung transformation $u_j=\sqrt{\rho_j}e^{\bi S_j(t)},$ $\rho_j=A$ is constant. As a consequence, the partial derivative of Fisher information $\frac {\partial I(\rho)}{\partial \rho_i}=0.$
On the other hand, since $S_i = \mathbb K \cdot x_i - \mu t - \sigma W(t) $, one can verify that $\frac 12\sum_{j\in N(i)}\omega_{ij}(S_i-S_j)^2 \frac {\partial \theta_{ij}(\rho)}{\partial \rho_i}=\frac{1}{2}|\mathbb{K}|^2=\mu$.
This implies that \begin{align*}
    {d S_i}+\frac 12\sum_{j\in N(i)}\omega_{ij}(S_i-S_j)^2 \frac {\partial \theta_{ij}(\rho)}{\partial \rho_i}dt+\frac 18 \frac {\partial I(\rho)}{\partial \rho_i}dt + \sigma dW_t=0
\end{align*} 
is satisfied.
Thus \eqref{con-linear-snls1} preserves all the stochastic dispersion relationship. 

Similar calculations can show that  \eqref{con-linear-snls2}  satisfy 
\begin{align*}
    {d S_i}+\frac 12\sum_{j\in N(i)}\omega_{ij}(S_i-S_j)^2 \frac {\partial \theta_{ij}(\rho)}{\partial \rho_i}\circ dW_t=0,
\end{align*}
which implies that \eqref{con-linear-snls2} preserves all the stochastic dispersion relationship.


\end{proof}

\section{Stochastic control problem on density manifold of finite graph}
\label{sec-control}

In this section, we propose two stochastic optimal control formulations corresponding to SNLSEs \eqref{snls-per} and \eqref{snls-whi} on graph respectively.

\subsection{Stochastic control problem with linear potential control}
We first assume that the linear potential term $\{\mathbb V_j\}_{j\in N}$ is a control variable depending on $t$. From the the proof of \eqref{prop-snls}, this will not affect the well-posedness of \eqref{snls-per}  and \eqref{snls-whi}.
For convenience, we denote the corresponding solution by $u^{\mathbb V}_j$ in the complex function representation and $(\rho^{\mathbb V}_j, S^{\mathbb V}_j)$ on Wasserstein manifold. 
The admissible control set $\mathcal U$ is defined by 
\begin{align*}
\mathcal U:=&\Big\{\mathbb V: \Omega\times [0,T] \to \mathbb R^N \;\big|\;\mathbb V(t) \;\;\text {is}\;\; \mathcal F_t\text{-adapted}, \mathbb V_j \in L^2([0,T]),\\
&\text{there exists}\; \alpha>0, \; \text{such that} \;  |\mathbb V_j| \le \alpha\; \text{a.s. for}\; j\in V \Big\}
\end{align*}
with $\gamma,\beta \ge0.$
Our first optimal control problem is to minimize the cost functional 
\begin{align}\label{ocp}
J(\mathbb V)&:=\gamma \E\Big[ \sum_{i=1}^N |u^{\mathbb V}_j(T)-f^1_j|^2 \Big]
+\beta \E\Big[\int_0^T \sum_{i=1}^N |\mathbb V_j(t)-Z_j(t)|^2dt\Big], 
\end{align}
subject to the constraint given by either \eqref{snls1} or \eqref{snls2} with given $(\rho(0),S(0)).$
Here $f^1$ is $\mathcal F_T$-adapted satisfying $\|f^1\|_{L^2(\Omega;\mathbb C^N)}<\infty$,  and $Z\in \mathcal U$.
The above optimal control problem may be viewed as the graph 
version of the stochastic control problem in \cite{BRZ16,MR2299629,MR3462390,MR3424692}.

 The following lemma (see, e.g., \cite[Chapter 3]{book1984}) is very useful to show the existence and uniqueness of the optimal control.

\begin{lm}\label{reg}
Let $\mathcal B$ be a uniformly convex Banach space and $\mathcal S$ a bounded closed subset of $\mathcal B$. Furthermore, let $F: \mathcal S \to \overline{\mathbb R}$ be a lower semi-continuous functional which is bounded from below and $p\ge 1$. Then there exists a dense subset $\mathcal D \subset \mathcal B$ such that for each $x\in \mathcal D$, the functional $F(s)+\|s-x\|_\mathcal B^p$ attains its minimum over $\mathcal S,$ which implies that there exists an $s(x)\in \mathcal S$ such that 
\begin{align*}
F(s(x))+\|s(x)-x\|_{\mathcal B}^p=\inf_{s\in \mathcal S}\{F(s)+\|s-x\|_{\mathcal B}^p\}.
\end{align*}
In particular, if $p>1$, then $s(x)$ is unique. Besides, each minimizing sequence converges strongly and the function $x \mapsto s(x)$ is continuous in $\mathcal D.$  
\end{lm}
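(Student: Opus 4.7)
The plan is to construct $\mathcal D$ as a Baire-category intersection of open dense sets, then exploit uniform convexity to extract a strongly convergent minimizing sequence for each $x \in \mathcal D$. For $x \in \mathcal B$, set $\phi(x) = \inf_{s \in \mathcal S}\{F(s) + \|s-x\|_{\mathcal B}^p\}$. By the triangle inequality and the boundedness of $\mathcal S$, $\phi$ is continuous on $\mathcal B$. For each integer $n \ge 1$, define
\begin{align*}
U_n = \Big\{ x \in \mathcal B : \exists\, \varepsilon > 0 \text{ with } \mathrm{diam}\{ s \in \mathcal S : F(s) + \|s-x\|_{\mathcal B}^p \le \phi(x) + \varepsilon\} < 1/n\Big\}.
\end{align*}
A short check using the continuity of $\phi$ and the lower semicontinuity of $F$ shows that each $U_n$ is open in $\mathcal B$: if $x' \to x$ and $\varepsilon$ witnesses $x \in U_n$, the $\varepsilon/2$-sublevel set at $x'$ sits inside the $\varepsilon$-sublevel set at $x$ once $x'$ is close enough.

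The heart of the argument is to prove that each $U_n$ is dense in $\mathcal B$. Given $x_0 \in \mathcal B$ and $\delta > 0$, the idea is to choose a near-minimizer $s_0$ with $F(s_0) + \|s_0 - x_0\|_{\mathcal B}^p \le \phi(x_0) + \eta$ for small $\eta > 0$, and then perturb $x_0$ to $x' = x_0 + t(x_0 - s_0)$ with a small $t > 0$. This perturbation sharpens the functional $F(s) + \|s - x'\|_{\mathcal B}^p$ near $s_0$: the uniform convexity of $\|\cdot\|_{\mathcal B}^p$ supplies a positive modulus $c$ such that for any two candidate near-minimizers $s_1, s_2$ of the perturbed functional,
\begin{align*}
\tfrac{1}{2}\|s_1 - x'\|_{\mathcal B}^p + \tfrac{1}{2}\|s_2 - x'\|_{\mathcal B}^p - \big\|\tfrac{s_1+s_2}{2} - x'\big\|_{\mathcal B}^p \ge c(\|s_1 - s_2\|_{\mathcal B}).
\end{align*}
Combined with the lower semicontinuity of $F$, this forces $\|s_1 - s_2\|_{\mathcal B}$ to be small, placing $x' \in U_n$ once $\eta$ and $t$ are tuned so that the resulting diameter falls below $1/n$ and $\|x' - x_0\|_{\mathcal B} < \delta$. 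This calibration is the technical crux of the proof; it is here that the uniform-convexity hypothesis on $\mathcal B$ is decisively used, and the case $p = 1$ requires a little extra care by invoking the strict convexity of $\|\cdot\|_{\mathcal B}$ that is implied by uniform convexity.

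Setting $\mathcal D = \bigcap_n U_n$ gives a dense $G_\delta$ in $\mathcal B$ by the Baire category theorem (using completeness of the Banach space). For any $x \in \mathcal D$, a minimizing sequence $(s_k) \subset \mathcal S$ of $F(s) + \|s-x\|_{\mathcal B}^p$ eventually lies inside sublevel sets of arbitrarily small diameter, hence is Cauchy; its limit $s(x)$ lies in $\mathcal S$ by closedness and attains the infimum by lower semicontinuity of $F$ together with continuity of $s \mapsto \|s-x\|_{\mathcal B}^p$. When $p > 1$, strict convexity of $\|\cdot\|_{\mathcal B}^p$ yields uniqueness, since two distinct minimizers would produce a strict inequality at their midpoint that contradicts minimality. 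Finally, the strong convergence of every minimizing sequence and the continuity of $x \mapsto s(x)$ on $\mathcal D$ follow from the same uniform diameter control at $x$ together with the continuity of $\phi$.
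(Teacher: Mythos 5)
The paper itself offers no proof of this lemma; it is quoted verbatim from the cited reference (\cite[Chapter 3]{book1984}), so your proposal has to stand on its own. Your architecture is the standard one for results of this Stechkin/Baranger--Temam type and most of it is sound: $\phi$ is continuous because $\mathcal S$ is bounded, each $U_n$ is open by the uniform comparison of sublevel sets you sketch, $\mathcal D=\bigcap_n U_n$ is a dense $G_\delta$ once density is known, and for $x\in\mathcal D$ the Cauchy property of minimizing sequences, attainment via lower semicontinuity, uniqueness, and continuity of $x\mapsto s(x)$ all follow as you describe.

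The gap is exactly at the step you call the technical crux, the density of $U_n$, and it is a genuine one. The displayed midpoint inequality cannot be ``combined with the lower semicontinuity of $F$'' to force two near-minimizers $s_1,s_2$ of the perturbed functional to coalesce: $\mathcal S$ is only closed and bounded, not convex, so $\tfrac{s_1+s_2}{2}$ need not belong to $\mathcal S$ and is not a competitor in the infimum ($F$ is not even defined there); and $F$ is not assumed convex, so even if the midpoint were admissible there is no upper bound of the form $F(\tfrac{s_1+s_2}{2})\le\tfrac12 F(s_1)+\tfrac12 F(s_2)$ --- lower semicontinuity supplies lower bounds, never the upper bound a midpoint argument needs. (This matters in the paper's own application, where $F(\mathbb V)=\gamma\,\E[\sum_i|u^{\mathbb V}_i(T)-f^1_i|^2]$ is not convex in $\mathbb V$ because $\mathbb V\mapsto u^{\mathbb V}$ is nonlinear.) The classical mechanism is different: one places the perturbed center $x'$ on the segment joining $x_0$ to the near-minimizer $s_0$, shows that any competitor $s$ with $F(s)+\|s-x'\|^p$ close to the new infimum must produce near-equality in the triangle inequality $\|s-x_0\|\le\|s-x'\|+\|x'-x_0\|$ with all three lengths pinned down, and then uses uniform convexity in the form ``near-equality in the triangle inequality forces alignment of directions'' to conclude that $s$ is close to $s_0$ itself; closeness of any two competitors follows, with no midpoint ever formed. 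As written, the density of $U_n$, and hence the lemma, is not established.
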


In our case, we take $\mathcal B:=L^2(\Omega\times[0,T];\mathbb C^N)$ which is uniformly convex, and choose $\mathcal S$ as the admission control set. The functional $F=\gamma \E\Big[ \sum_{i=1}^N |u^{\mathbb V}_j(T)-f^1_j|^2 \Big]$ is bounded from below and $p=2.$
According to Lemma \ref{reg}, if we can verify the lower semi-continuity of $F$, then there exists a dense subset $\mathcal D$ of $\mathcal B$ such that 
for each $Z\in \mathcal D$ the functional $J(\mathbb V)=F(\mathbb V)+\beta\|\mathbb V-Z\|^2_{\mathcal B}$ attains its unique minimum over $\mathcal U.$ In other word, there exists a unique $\mathbb V^*\in \mathcal U$ such that 
\begin{align*}
J(\mathbb V^*)=F(\mathbb V^*)+\beta \|\mathbb V^*-Z\|_{\mathcal B}^2
=\inf_{\mathbb V} J(\mathbb V).
\end{align*}
To prove the lower semi-continuity of $u^{\mathbb V}$ with respect to $\mathbb V$,  we show a strong convergence result first.

\begin{prop}\label{converge}
Let $u(0)$ be $\mathcal F_0$-adapted with any finite moment satisfying $u_j(0)\neq 0,j\le N.$ 
Let the sequence $\{\mathbb V^n\}_{n\ge1}\subset \mathcal U$ be convergent to $\mathbb V$ and $u^{\mathbb V^n}$ be the corresponding solution of the stochastic nonlinear Schr\"odinger equation \eqref{snls1} (or \eqref{snls2}) with respect to the control $\mathbb V^n$ and the initial value $u^{\mathbb V^n}(0)=u(0)$.
Then the sequence $(u^{\mathbb V^n})\in  L^2(\Omega; \mathcal C([0,T];\mathbb C^N)), n\ge 1$, converges strongly to the solution of stochastic nonlinear Schr\"odinger equation \eqref{snls1} (or \eqref{snls2}) with respect to the control $\mathbb V\in \mathcal U.$
\end{prop}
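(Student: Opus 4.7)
The plan is to establish the convergence by working with the It\^o form of \eqref{snls-per-com} (or \eqref{snls-whi-com}) and applying It\^o's formula to $\sum_j |u^{\mathbb V^n}_j(t)-u^{\mathbb V}_j(t)|^2$. Because the nonlinear Laplacian $\Delta_G u$ given by \eqref{non-lap} involves $|u_j|^{-2}$ and $\log u_j$, and the entropy term $u_j\log|u_j|^2$ blows up as $|u_j|\to 0$, the nonlinearity is smooth but not globally Lipschitz on $\mathbb C^N$. I therefore combine uniform-in-$n$ a priori bounds on $(u^{\mathbb V^n})$ with a localization in $\omega$ through stopping times, and remove the localization by a tail argument.

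First I would harvest the a priori estimates that hold uniformly in $n$. Mass conservation forces $\sum_j|u^{\mathbb V^n}_j(t)|^2=1$, and because the controls satisfy $|\mathbb V^n_j|\le\alpha$, Proposition \ref{prop-snls}(ii) yields a moment bound $\E[\sup_{t\le T}\mathcal E^p(u^{\mathbb V^n}(t))]\le C_p$ independent of $n$. Retracing the argument of Corollary \ref{cor-low} shows that the (random or deterministic, depending on whether \eqref{snls-per-com} or \eqref{snls-whi-com} is considered) lower bound on $\rho^{\mathbb V^n}_j$ depends only on the kinetic energy and Fisher information, so its distribution is controlled uniformly in $n$. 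Define, for each $R>0$, the stopping time
\begin{align*}
\tau^n_R := \inf\bigl\{t\in[0,T]\,:\, \mathcal E(u^{\mathbb V^n}(t))\vee \mathcal E(u^{\mathbb V}(t))>R \ \text{or}\ \min_j\bigl(|u^{\mathbb V^n}_j(t)|^2\wedge |u^{\mathbb V}_j(t)|^2\bigr)<1/R\bigr\}\wedge T,
\end{align*}
so that on $\{t\le\tau^n_R\}$ all the nonlinear coefficients are globally Lipschitz with some constant $L(R)$.

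Next I would convert the Stratonovich equation to It\^o form (the noise $\sigma_j u_j\circ dW_t$ is linear in $u$, giving only a benign linear correction) and apply It\^o's formula to $|u^{\mathbb V^n}(t\wedge\tau^n_R)-u^{\mathbb V}(t\wedge\tau^n_R)|^2$ before taking expectation. The drift difference decomposes into a control-perturbation piece bounded by $C\,\E\int_0^T\sum_j|\mathbb V^n_j-\mathbb V_j|^2\,dt$ after Cauchy--Schwarz, and a genuinely nonlinear piece that is controlled on the stopped event by $L(R)\sum_j|u^{\mathbb V^n}_j-u^{\mathbb V}_j|^2$. Gronwall's inequality then gives
\begin{align*}
\E\Bigl[\sup_{t\le \tau^n_R}\sum_j|u^{\mathbb V^n}_j(t)-u^{\mathbb V}_j(t)|^2\Bigr]\le C(R,T)\,\E\int_0^T\sum_j|\mathbb V^n_j(t)-\mathbb V_j(t)|^2\,dt,
\end{align*}
which tends to zero as $n\to\infty$ for each fixed $R$.

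Finally, to remove the localization I use $|u^{\mathbb V^n}_j-u^{\mathbb V}_j|^2\le 4$ by mass conservation, so the contribution from $\{\tau^n_R<T\}$ is bounded by $4N\,\PP(\tau^n_R<T)$. The energy tail is controlled by Markov's inequality applied to the uniform moment bound, and the density tail by inverting the monotone dependence of the Corollary \ref{cor-low} lower bound on the controlled quantities, yielding $\sup_n\PP(\tau^n_R<T)\to 0$ as $R\to\infty$. Letting first $n\to\infty$ and then $R\to\infty$ concludes the strong convergence in $L^2(\Omega;\mathcal C([0,T];\mathbb C^N))$. The main obstacle will be the third step: establishing the Lipschitz constant $L(R)$ of $\Delta_G u$ explicitly, because \eqref{non-lap} combines $\partial\theta/\partial\rho$, which can be singular at the boundary of the simplex, with differences of $\log u_j$. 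The lower bound $|u_j|^2\ge 1/R$ baked into $\tau^n_R$ is exactly what makes these terms Lipschitz with a computable constant depending only on $R$ and the graph structure.
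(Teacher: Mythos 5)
Your proposal follows essentially the same route as the paper: uniform a priori bounds (mass conservation, energy moments), localization to a good event on which the density is bounded below and the coefficients become Lipschitz, a Gronwall estimate there driven by $\E\int_0^T|\mathbb V^n-\mathbb V|^2dt$, and removal of the localization by Chebyshev-type tail estimates built on the lower-bound random variable of Corollary \ref{cor-low}. The only substantive difference is that the paper runs the Gronwall step in the Madelung coordinates, applying It\^o's formula to $|\sqrt{\rho^{\mathbb V^n}}-\sqrt{\rho^{\mathbb V}}|^2$ and $|S^{\mathbb V^n}-S^{\mathbb V}|^2$ with a truncation that explicitly bounds $\|S\|_{\mathcal C([0,t];\mathbb R^N)}$, which keeps every coefficient single-valued; if you work directly with $|u^{\mathbb V^n}-u^{\mathbb V}|^2$ you must additionally justify that the phase terms $\Im(\log u_j-\log u_l)$ in \eqref{non-lap} are well-defined, bounded functions of $u$ on your stopped event (they are determined by continuity in time, not by $u$ alone), which is precisely the point your truncation on energy and $\min_j|u_j|^2$ alone leaves implicit.
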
 

\begin{proof}
In this proof, we only show the details when the constraint is \eqref{snls1}. A similar argument can lead to the strong convergence result for the case of \eqref{snls2}.
By Proposition \ref{prop-snls}, the It\^o formula, and the Burkholder's inequality,  we have the following {\it a priori} estimates, 
\begin{align}\label{mass}
&\sum_{i=1}^N|u_i^{\mathbb V_n}(t)|^2=\sum_{i=1}^N|u_i(0)|^2=1, \; \text{a.s.}\\\label{energy}
&\E\Big[\sup_{t\in [0,T]}\Big( \<\nabla_G S^{\mathbb V_n}(t), \nabla_G S^{\mathbb V_n}(t)\>_{\theta(\rho^{\mathbb V_n}(t))}+\frac 18 I(\rho^{\mathbb V_n}(t))\Big)^p\Big]\le C(u(0),T,\alpha,p), \; p\ge 1. 
\end{align}
To show the strong convergence of $u^{\mathbb V_n},$ we introduce a stopping time $\tau_c$ defined by 
\begin{align*}
\tau_c^n:=\inf\{t\in [0,T]: \|S^{\mathbb V_n}\|_{\mathcal C([0,t];\mathbb R^N)}\ge c\}\wedge \inf\{t\in [0,T]: \min_{i=1}^N \min_{s\in [0,t]}\rho_i^{\mathbb V_n}(s)\le \frac 1{c}\}.
\end{align*}
By Corollary \ref{cor-low}, we have that $\lim_{c\to \infty} \tau_c =T, a.s.$
Introduce the truncated sample subspace $\Omega_c^n$ defined by 
$$\Omega_c^n=\left\{\sup_{t\in [0,T]}\|S^{\mathbb V_n}\|_{\mathcal C([0,t];\mathbb R^N)}\le c, \min_{i=1}^N \min_{s\in [0,T]}\rho_i^{\mathbb V_n}(s)\ge \frac 1{c}\right\}.$$
Similarly, we denote $\Omega_c$ as the truncated sample subspace with respect to $u^{\mathbb V}.$
Our goal is to show the error estimate in $\Omega_c^n\cap \Omega_c$ and $\Omega/\{\Omega_c^n\cap \Omega_c\}.$
First, we prove the convergence in $\Omega/\{\Omega_c^n\cap \Omega_c\}.$ Due to the mass conservation law \eqref{mass} of the stochastic nonlinear Schr\"odinger equation, by applying the Chebyshev's inequality, we get 
\begin{align*}
&\|1_{\Omega/\{\Omega_c^n\cap \Omega_c\}}(u^{\mathbb V^n}-u^{\mathbb V})\|_{\mathcal B}^2\\
&\le \int_0^T \E \Big[1_{\Omega/\{\Omega_c^n\cap \Omega_c\}} (|u^{\mathbb V^n}(s)|^2+|u^{\mathbb V}(s)|^2)\Big]ds\\
&\le CT \Big[\mathbb P(\sup_{s\in [0,T]}|S^{\mathbb V^n}|\ge c)+\mathbb P(\sup_{s\in [0,T]}|S^{\mathbb V}|\ge c)\\
& +\mathbb P(\min_{i=1}^N \min_{s\in [0,T]}\rho_i^{\mathbb V^n}(s)\le \frac 1{c}) +\mathbb P(\min_{i=1}^N \min_{s\in [0,T]}\rho_i^{\mathbb V}(s)\le \frac 1{c})\Big].
\end{align*}
It suffices to prove all the above probabilities converges to $0$ as $c\to\infty$.
Indeed, since $G$ is connected, by applying the lower bound estimate in \cite[Section 3]{CLZ20a}, there exists a positive random variable $C(\omega)$ such that 
\begin{align}\label{low-est}
\inf_{ t \ge 0}\min_{i\le N}\rho_i^{\mathbb V^n}(t)\ge c_2\exp(-c_1 C(\omega)),
\end{align}
Here $c_2,c_1>0$ are constants  depending on the structure of $G$, and $C(\omega)$ is the positive random variable in Corollary
\ref{cor-low}. More precisely,
the positive random variable $C(\omega)$ is bounded by the upper bound of $\mathbb V^n$ and $\mathbb V$ plus 
\begin{align}\label{rand}
\sup_{t\in [0,T]}\Big( \<\nabla_G S^{\mathbb V^n}(t), \nabla_G S^{\mathbb V^n}(t)\>_{\theta(\rho^{\mathbb V^n}(t))}+\frac 18 I(\rho^{\mathbb V^n}(t))\Big),
\end{align}
which possess any finite moment by \eqref{energy}.
Thus, by \eqref{low-est}, Chebyshev's inequality and the monotonicity of the logarithmic function, we get 
\begin{align}\label{tail-est}
&\mathbb P( \min_{s\in [0,T]}\min_{i=1}^N \rho_i^{\mathbb V^n}(s)\le \frac 1{c}) \\\nonumber
& \le \mathbb P(c_2\exp(-c_1 C(\omega)) \le \frac 1{c}) \\\nonumber  
&=\mathbb P(C(\omega)\ge \frac 1{c_1}(\log(c)+\log(c_2)))\\\nonumber 
&\le \frac {c_1^p\E\Big[C(\omega)^p \Big]}{(\log(c)-\log(c_2))^p},\; p\ge1.
\end{align}
 When $c\to\infty,$ by the dominated convergence theorem, we have that 
\begin{align*}
&\lim_{c\to \infty}\Big[\mathbb P(\min_{i=1}^N \min_{s\in [0,T]}\rho_i^{\mathbb V^n}(s)\le \frac 1{c}) +\mathbb P(\min_{i=1}^N \min_{s\in [0,T]}\rho_i^{\mathbb V}(s)\le \frac 1{c})\Big]=0.
\end{align*}

For the tail estimate of $S^{\mathbb V^n},$ we make use of the differential equation of $S^{\mathbb V^n}$ and get that 
\begin{align*}
|S_i^{\mathbb V^n}(t)|&\le |S_i^{\mathbb V^n}(0)|+\int_0^T\sum_{j\in N(i)} \frac 14 |S_i-S_j|^2\omega_{ij}+|\frac {\partial }{\partial \rho_i} I(\rho)|ds\\
&+\int_0^T |\mathbb V^n_i|+ \sum_{j=1}^N |\mathbb W_{ij}|\rho_jds+\sup_{t\in [0,T]}|\int_0^t \sigma_i dW(s)|.
\end{align*}
The Burkholder's inequality yields that $\E \Big[\sup\limits_{t\in [0,T]}|\int_0^t \sigma_i dW(s)|^p\Big]\le C(p,\sigma).$ Notice that \eqref{energy} and \eqref{low-est} implies that 
\begin{align*}
\max_{ij\in E}|S_i-S_j|^2&\le \frac  2{\min_{ij\in E} \omega_{ij}(\rho_i+\rho_j)}C(\omega)\le  \frac 1{\min_{ij\in E} \omega_{ij}c_2} \exp(c_1 C(\omega)) C(\omega),\; \text{a.s.}\\
\max_{i} |\frac {\partial }{\partial \rho_i} I(\rho)|
&\le \max_{ij} \omega_{ij} \max_{i} [\frac 2{\rho_i}+2|\log(\rho_i)|]\\
&\le \max_{ij} \omega_{ij} 2(\frac 1{c_2}\exp(c_1C(\omega))+|\log(c_2)|+c_1C(\omega))<\infty, \; \text{a.s.}
\end{align*} 

Combining with the fact that $|\mathbb V^n_i(t)|\le \alpha$, we conclude that for $c$ large enough,  
\begin{align*}
&\mathbb P(\sup_{s\in [0,T]}|S^{\mathbb V^n}|\ge c)\\
&\le \mathbb P(  \min_{ij\in E} \frac 1{\omega_{ij}c_2} \exp(c_1 C(\omega)) C(\omega) \ge \frac c {4T})\\
&+\mathbb P(\max_{ij\in E} \omega_{ij} 2(\frac 1{c_2}\exp(c_1C(\omega))+|\log(c_2)|+c_1C(\omega))\ge \frac c{4T})\\
&+\mathbb P( \sup_{i\le N} \sup_{t\in [0,T]}|\int_0^t \sigma_i dW(s)|\ge \frac c{4T})\\
&+\mathbb P(\sup_{i\le N}|S_i^{\mathbb V^n}(0)|+\max_{ij\in E} \mathbb W_{ij} + \alpha T \ge \frac c{4T}).
\end{align*}
Using the moment estimate of $C(\omega)$ and Chebyshev's inequality, we obtain that 
\begin{align*}
\lim_{c\to \infty}\mathbb P(\sup_{s\in [0,T]}|S^{\mathbb V^n}|\ge c)=0.
\end{align*}
Similarly, we can get $\lim_{c\to \infty}\mathbb P(\sup_{s\in [0,T]}|S^{\mathbb V}|\ge c)=0.$ 

On $\Omega_c^n\cap \Omega_c$, we use the stopping time technique to show the strong convergence. By the definition of $\tau_c^n$ and $\tau_c$ we can see that $\tau_c^n= T$ on $\Omega_c^n$ and $\tau_c= T$ on $\Omega_c.$  
According to the complex form of $u^{\mathbb V^n}=\sqrt{\rho^{\mathbb V^n}}e^{\bi S^{\mathbb V^n}},$ we have that 
\begin{align*}
&\int_0^T\E[1_{\Omega_c^n\cap \Omega_c}|u^{\mathbb V^n}-u^{\mathbb V}|^2]ds\\
&\le \int_0^T\sum_{i=1}^{N} 2\Big(\E[1_{\Omega_c^n\cap \Omega_c}|\sqrt{\rho^{\mathbb V^n}_i}-\sqrt{\rho^{\mathbb V}}_i|^2] +\E[1_{\Omega_c^n\cap \Omega_c}|\sqrt{\rho_i^{\mathbb V}}(e^{\bi S_i^{\mathbb V^n}}-e^{\bi S_i^{\mathbb V}})|^2]\Big)ds\\
&\le C\int_0^T\sum_{i=1}^{N} \Big(\E[1_{\Omega_c^n\cap \Omega_c}|\sqrt{\rho_i^{\mathbb V^n}}-\sqrt{\rho_i^{\mathbb V}}|^2] +\E[1_{\Omega_c^n\cap \Omega_c}|S_i^{\mathbb V^n}-S_i^{\mathbb V}|^2]\Big)ds.
\end{align*}
By applying the It\^o formula before $\tau_c^n\cap \tau_c$ and H\"older's inequality, 
we obtain that 
\begin{align*}
&|\sqrt{\rho^{\mathbb V^n}(t)}-\sqrt{\rho^{\mathbb V}(t)}|^2\\
&=\int_0^t 2\sum_{i=1}^N \sum_{j\in N(i)} \Big(\frac 1{\sqrt{\rho_i^{\mathbb V^n}}}(S_i^{\mathbb V^n}-S_j^{\mathbb V^n})\theta_{ij}(\rho^{\mathbb V^n})-\frac 1{\sqrt{\rho_i^{\mathbb V}}}(S_i^{\mathbb V}-S_j^{\mathbb V})\theta_{ij}(\rho^{\mathbb V})\Big)(\sqrt{\rho_{i}^{\mathbb V^n}}-\sqrt{\rho_{i}^{\mathbb V}})ds\\
&\le \int_0^t C (1+c)\sum_{i=1}^N\Big(|S_i^{\mathbb V^n}-S_i^{\mathbb V}|\sqrt{\rho_i^{\mathbb V^n}}-\sqrt{\rho_i^{\mathbb V}}|+|\sqrt{\rho_i^{\mathbb V^n}}-\sqrt{\rho_i^{\mathbb V}}|^2\Big)ds
\end{align*}
and that
\begin{align*}
&|S^{\mathbb V^n}(t)-S^{\mathbb V}(t)|^2\\
&=\int_0^t 2\sum_{i=1}^N\sum_{j\in N(i)}(-\frac 14(S_i^{\mathbb V^n}-S_j^{\mathbb V^n})^2+\frac 14(S_i^{\mathbb V}-S_j^{\mathbb V})^2)(S_i^{\mathbb V^n}-S_i^{\mathbb V})ds\\
&+\int_0^t 2\sum_{i=1}^N(-\mathbb V_i^n+\mathbb V_i)(S_i^{\mathbb V^n}-S_i^{\mathbb V})ds\\
&+\int_0^t 2\sum_{i=1}^N\sum_{j=1^N}(-\mathbb W_{ij}\rho_{j}^{\mathbb V^n}+\mathbb W_{ij}\rho_{j}^{\mathbb V} )(S_i^{\mathbb V^n}-S_i^{\mathbb V})ds\\
&\le \int_0^t C(1+c)\Big(|S^{\mathbb V^n}-S^{\mathbb V}|^2+|\sqrt{\rho^{\mathbb V^n}}-\sqrt{\rho^{\mathbb V}}|^2+|\mathbb V^n-\mathbb V|^2\Big)ds.
\end{align*}
The Gronwall's inequality, together with the above estimates, leads to 
\begin{align*}
\E\Big[|\sqrt{\rho^{\mathbb V^n}(t)}-\sqrt{\rho^{\mathbb V}(t)}|^2+|S^{\mathbb V^n}(t)-S^{\mathbb V}(t)|^2\Big]
&\le \exp^{\int_0^tC(1+c)ds} \int_0^t\E\big[|\mathbb V^n-\mathbb V|^2\big]ds.
\end{align*}
Taking $n\to\infty$ and then $c\to \infty$, we achieve that 
\begin{align*}
&\lim_{c\to\infty}\lim_{n\to \infty}\int_0^T\E[1_{\Omega_c^n\cap \Omega_c}|u^{\mathbb V^n}-u^{\mathbb V}|^2]ds\\
&\le \lim_{c\to\infty}\lim_{n\to \infty} \int_0^T \exp^{\int_0^tC(1+c)ds} \int_0^t\E\big[|\mathbb V^n-\mathbb V|^2\big]dsdt=0.
\end{align*}
Combining the estimate on $\Omega_c^n\cap \Omega_c$ and $\Omega/(\Omega_c^n\cap \Omega_c),$ we obtain the desired result. Similarly, one could also obtain the strong convergence of $u^{\mathbb V^n}$ in the topology 
$L^2(\Omega; \mathcal C([0,T];\mathbb C^N)).$
\end{proof}

\begin{tm}\label{sop-exi}
Let $\beta\ge 0.$
For the control problem \eqref{ocp} with the constraint \eqref{snls1} or \eqref{snls2}.
there always exists an optimal control $\mathbb V^*\in \mathcal U$ which minimizes the objective functional $J.$
\end{tm}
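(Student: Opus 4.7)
The plan is to invoke the perturbed minimization Lemma \ref{reg} in the uniformly convex Hilbert space $\mathcal B=L^2(\Omega\times[0,T];\mathbb C^N)$ with $\mathcal S=\mathcal U$. The admissible set $\mathcal U$ is convex, bounded (by $|\mathbb V_j|\le\alpha$), and $\mathcal B$-closed, since both the pointwise bound and the $\mathcal F_t$-adaptedness pass to $\mathcal B$-limits. The functional
\[
F(\mathbb V):=\gamma\,\mathbb E\Bigl[\sum_{j=1}^N|u^{\mathbb V}_j(T)-f^1_j|^2\Bigr]
\]
is nonnegative, hence bounded below, and its lower semi-continuity on $\mathcal U$ is a direct consequence of Proposition \ref{converge}: strong convergence $\mathbb V^n\to\mathbb V$ in $\mathcal B$ with $\mathbb V^n,\mathbb V\in\mathcal U$ forces $u^{\mathbb V^n}\to u^{\mathbb V}$ in $L^2(\Omega;\mathcal C([0,T];\mathbb C^N))$, and evaluating at $t=T$ together with $\|f^1\|_{L^2(\Omega;\mathbb C^N)}<\infty$ and the triangle inequality gives $F(\mathbb V^n)\to F(\mathbb V)$, so $F$ is in fact strongly continuous.

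For $\beta>0$, Lemma \ref{reg} with $p=2$ (after rescaling $F$ and the $\mathcal B$-norm so as to match $J(\mathbb V;Z')=\gamma F(\mathbb V)+\beta\|\mathbb V-Z'\|_{\mathcal B}^2$ with the regularizer appearing in the lemma) yields a dense subset $\mathcal D\subset\mathcal B$ such that for every $Z'\in\mathcal D$ the functional $J(\cdot;Z')$ attains a unique minimum $\mathbb V^\ast(Z')\in\mathcal U$, with $Z'\mapsto\mathbb V^\ast(Z')$ continuous on $\mathcal D$. This already proves the theorem whenever the given $Z$ lies in $\mathcal D$. For a general $Z\in\mathcal U\subset\mathcal B$, the plan is to approximate $Z$ by $Z^n\in\mathcal D$ with $Z^n\to Z$ in $\mathcal B$, set $\mathbb V^{*,n}:=\mathbb V^\ast(Z^n)\in\mathcal U$, and extract a weakly convergent subsequence $\mathbb V^{*,n}\rightharpoonup \mathbb V^\ast$ using boundedness of $\mathcal U$ and reflexivity of $\mathcal B$; Mazur's theorem then places $\mathbb V^\ast\in\mathcal U$ since $\mathcal U$ is convex and strongly closed.

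The hard part will be to pass to the limit in the optimality inequality $J(\mathbb V^{*,n};Z^n)\le J(\tilde{\mathbb V};Z^n)$ for $\tilde{\mathbb V}\in\mathcal U$: the quadratic penalty is weakly lsc in $\mathcal B$, but Proposition \ref{converge} provides only strong-to-strong continuity of the SNLSE solution map, so the weak lsc of $F$ is not automatic. My strategy is to upgrade the weak convergence $\mathbb V^{*,n}\rightharpoonup\mathbb V^\ast$ to a strong one: subtracting the two optimality inequalities for $(\mathbb V^{*,n},Z^n)$ and $(\mathbb V^{*,m},Z^m)$ produces the monotonicity relation $\Re\<\mathbb V^{*,n}-\mathbb V^{*,m},Z^n-Z^m\>_{\mathcal B}\ge 0$; combined with the strict convexity of $\|\cdot-Z^n\|_{\mathcal B}^2$, the uniform $L^\infty$ bound $|\mathbb V^{*,n}_j|\le\alpha$ inherited from $\mathcal U$, the a priori estimate \eqref{energy}, and a tail estimate of the type \eqref{tail-est} used in the proof of Proposition \ref{converge}, this should force $\{\mathbb V^{*,n}\}$ to be Cauchy in $\mathcal B$. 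Once strong convergence is established, Proposition \ref{converge} gives $F(\mathbb V^{*,n})\to F(\mathbb V^\ast)$, and passing to the limit in the optimality inequality yields $J(\mathbb V^\ast;Z)\le J(\tilde{\mathbb V};Z)$ for every $\tilde{\mathbb V}\in\mathcal U$. The edge case $\beta=0$ is then treated by the direct method applied to a minimizing sequence for $F$ over $\mathcal U$, invoking the same compactness ingredients.
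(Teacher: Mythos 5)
Your overall architecture---Lemma \ref{reg} in $\mathcal B=L^2(\Omega\times[0,T];\mathbb C^N)$ with the (lower semi-)continuity of $F$ supplied by Proposition \ref{converge} for $\beta>0$, and the direct method with Mazur's theorem for $\beta=0$---is exactly the paper's route, and the continuity of $F$ along strongly convergent sequences is argued correctly. The difficulty you flag as ``the hard part'' is real: Lemma \ref{reg} only produces a minimizer when the offset $Z$ lies in the dense subset $\mathcal D$, and the paper silently ignores the extension to an arbitrary $Z\in\mathcal U$. However, your proposed resolution has a genuine gap. Adding the two optimality inequalities for $(\mathbb V^{*,n},Z^n)$ and $(\mathbb V^{*,m},Z^m)$ does yield $\Re\langle \mathbb V^{*,n}-\mathbb V^{*,m},\,Z^n-Z^m\rangle_{\mathcal B}\ge 0$, but by Cauchy--Schwarz this is perfectly consistent with $\|\mathbb V^{*,n}-\mathbb V^{*,m}\|_{\mathcal B}$ remaining bounded away from zero while $Z^n-Z^m\to 0$; it gives no control whatsoever on $\|\mathbb V^{*,n}-\mathbb V^{*,m}\|_{\mathcal B}$. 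To turn it into a Cauchy estimate one needs the reverse, quantitative inequality $\|\mathbb V^{*,n}-\mathbb V^{*,m}\|_{\mathcal B}^2\le \Re\langle \mathbb V^{*,n}-\mathbb V^{*,m},Z^n-Z^m\rangle_{\mathcal B}$, i.e.\ strong monotonicity of the first-order condition, and that requires the full functional $J(\cdot;Z)$---hence $F$ itself---to be convex in $\mathbb V$. But $F(\mathbb V)=\gamma\,\E\bigl[\sum_j|u^{\mathbb V}_j(T)-f^1_j|^2\bigr]$ is a convex function of the terminal state composed with the highly nonlinear solution map $\mathbb V\mapsto u^{\mathbb V}(T)$ of \eqref{snls1}, and no convexity in $\mathbb V$ is available; the strict convexity of the quadratic penalty alone does not transfer to the sum. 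So the Cauchy claim does not close, and with it the passage from $Z\in\mathcal D$ to general $Z$.

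The repair is cheaper than what you attempt: for arbitrary $Z\in\mathcal U$ and any $\beta\ge 0$, run the direct method on $J(\cdot;Z)$ itself, exactly as you and the paper do for $\beta=0$. Take a minimizing sequence in the bounded, convex, closed set $\mathcal U$, extract a weak limit $\mathbb V^*\in\mathcal U$, pass to Mazur convex combinations converging strongly, and invoke Proposition \ref{converge} together with Fatou; the penalty $\mathbb V\mapsto\beta\|\mathbb V-Z\|_{\mathcal B}^2$ is convex and strongly continuous, hence weakly lower semi-continuous, and adds no difficulty. (Be aware that the convexity step $J(\sum_n\alpha_{nm}\mathbb V^{n+m})\le\sum_n\alpha_{nm}J(\mathbb V^{n+m})$ still has to pass through the nonlinear solution map, a point the paper's own $\beta=0$ argument also treats only briskly.) With that substitution your proposal matches the provable content of Theorem \ref{sop-exi}.
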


\begin{proof}

By Lemma \ref{reg}, to get the unique existence of an optimal control,
it suffices to show the lower continuity of $F$ if $\beta>0$, which can be obtained by using Proposition \ref{converge} and the Fatou lemma. 

In the following, we show the existence of an optimal control when $\beta=0.$
Since $\gamma  \sum_{i=1}^N |u^{\mathbb V}_i(T)-f^1_i|^2$ is bounded from below and $|\mathbb V_i|\le \alpha$ in $\mathcal U$, the infimum of $F$
exists. Let $(u^{\mathbb V^n},\mathbb V^n)$ be a minimizing sequence. 
By the a priori estimate in Proposition \ref{prop-snls}, there exists a subsequence, still denoted by $\mathbb V^n$, such that $\mathbb V^n \to \mathbb V^*$ weakly in $L^2(\Omega\times [0,T];\mathbb R^N).$
By Mazur's theorem, we have a sequence of convex combinations denoted by $\widetilde {\mathbb V}^{m}:\sum_{n\ge 1}\alpha_{nm}u_{n+m}$ with $\alpha_{nm}\ge 0, \sum_{n\ge 1}\alpha_{nm}=1$ such that 
\begin{align*}
\widetilde {\mathbb V}^{m} \to {\mathbb V^*}, \; \text{strongly in} \; L^2(\Omega\times [0,T];\mathbb R^N). 
\end{align*}
Using the fact that $|\widetilde{\mathbb V}^{m}_i|\le \alpha,$ it follows that $\mathbb V^*\in \mathcal U.$ By Proposition \ref{converge}, we also have the strong convergence, $u^{\widetilde{\mathbb V}^{m}}\to u^{\mathbb V^*}$ in $L^2(\Omega; \mathcal C([0,T];\mathbb C^N)).$ Therefore, $(u^{\mathbb V^*},\mathbb V^*)$ is admissible. By making use of the convexity of $ |u^{\mathbb V}_i(T)-f^1_i|^2, i\le N$ and the Fatou lemma, we conclude that
\begin{align*}
J(u^{\mathbb V^*})&\le \lim_{m\to\infty} J(\widetilde {\mathbb V}^{m})\le \lim_{m\to \infty} \sum_{n\ge 1}\alpha_{nm}J(\widetilde {\mathbb V}^{m})
\le \inf_{\mathbb V\in \mathcal U} J(\mathbb V),
\end{align*} 
which completes the proof.
\end{proof}

From the above procedures, it can be seen that
all the results in this subsection still hold as long as 
the cost functional in \eqref{ocp} take the form of $J(\mathbb V)=\E[f(u^{\mathbb V}(T))]+\beta \E\Big[\int_0^T \sum_{i=1}^N |\mathbb V_j(t)-Z_j(t)|^2dt\Big],
$
where the function $f$ has a lower bound and is lower semi-continuous convex. We also would like to remark that the second term $ \E\Big[\int_0^T \sum_{i=1}^N |\mathbb V_j(t)-Z_j(t)|^2dt\Big]$ could be extended to more general objective functional, like $\E \Big[\int_0^T |u^{\mathbb V}(t)-Z^1(t)|^2\Big]dt$ with an $\mathcal F_t$-adapted and $L^2$-integrable process $Z^1$, whose integrator is bounded from below and convex.

\subsection{Stochastic control problem with diffusion control}

Similar to the linear potential control problem on graph,
 we can also obtain the existence of an optimal control problem with diffusion control which has not been reported even in the continuous case. Since the proof is similar to that of Theorem \ref{sop-exi}, we omit the details and only present the main result here. 
 
 Consider the constraint \eqref{snls1} with the control variable $\sigma\in \mathbb R^N.$
 The admissible control set $\widetilde {\mathcal U}$ is defined by 
\begin{align*}
\widetilde {\mathcal U}:=&\Big\{\sigma: \Omega\times [0,T] \to \mathbb R^N \;\big|\;\sigma(t) \;\;\text {is}\;\; \mathcal F_t\text{-adapted}, \sigma \in L^2([0,T]),\\
&\text{there exists}\; \alpha>0, \; \text{such that} \;  |\sigma_j| \le \alpha\; \text{a.s.} \Big\}
\end{align*}
Here the optimal control problem is to minimize the cost functional 
\begin{align}\label{ocp1}
J(\sigma)&:=\gamma \E\Big[ \sum_{i=1}^N |u^{\sigma}_i(T)-f^1_i|^2 \Big]
+\beta \E\Big[\int_0^T \sum_{i=1}^N | \sigma_i(t)-Z_i(t)|^2dt\Big], 
\end{align}
where $\gamma,\beta\ge 0$, $f^1$ is $\mathcal F_T$-adapted satisfying $\|f^1\|_{L^2(\Omega;\mathbb C^N)}<\infty$,  $Z\in \widetilde{\mathcal U}$,  $u^{\sigma}$ is the solution of \eqref{snls1} with the control $\sigma.$ 

\begin{tm}\label{sop-exi-dif}
For the control problem \eqref{ocp1} with the constraint \eqref{snls1}, 
there always exists an optimal control $\sigma^*\in \widetilde {\mathcal U}$ which minimizes the objective functional $J.$
\end{tm}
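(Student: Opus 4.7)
The plan is to parallel the proof of Theorem \ref{sop-exi}, reducing the claim to a strong-convergence lemma analogous to Proposition \ref{converge} but now for the diffusion control. Concretely, I would first prove that if $\sigma^n\to\sigma$ strongly in $L^2(\Omega\times[0,T];\mathbb R^N)$ with every $\sigma^n\in\widetilde{\mathcal U}$, then $u^{\sigma^n}\to u^\sigma$ strongly in $L^2(\Omega;\mathcal C([0,T];\mathbb C^N))$. Once this is in hand, the two-case structure at the end of Theorem \ref{sop-exi} transfers verbatim: when $\beta>0$, apply Lemma \ref{reg} with $\mathcal B=L^2(\Omega\times[0,T];\mathbb R^N)$, $\mathcal S=\widetilde{\mathcal U}$, and $F(\sigma)=\gamma\E[\sum_i|u^\sigma_i(T)-f^1_i|^2]$, whose lower semi-continuity follows from the lemma plus Fatou; when $\beta=0$, extract a weakly convergent subsequence from a minimizing sequence, apply Mazur's theorem to obtain a strongly convergent sequence of convex combinations $\widetilde{\sigma}^m\in\widetilde{\mathcal U}$ with limit $\sigma^*$, and conclude via Fatou and convexity of $|u^\sigma_i(T)-f^1_i|^2$.

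The convergence lemma itself follows the stopping-time decomposition of Proposition \ref{converge}. Define
\[
\tau_c^n:=\inf\{t\in[0,T]:\|S^{\sigma^n}\|_{\mathcal C([0,t];\mathbb R^N)}\ge c\}\wedge\inf\{t\in[0,T]:\min_i\min_{s\in[0,t]}\rho^{\sigma^n}_i(s)\le 1/c\},
\]
and the analogous $\Omega_c^n$, $\tau_c$, $\Omega_c$. The tail estimates on $\Omega\setminus(\Omega_c^n\cap\Omega_c)$ carry over unchanged: the lower bound \eqref{low-est} and the moment bound \eqref{rand} depend only on the uniform constraint $|\sigma_j^n|\le\alpha$ and the energy estimate \eqref{energy}, both of which are uniform in $n$.

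The one genuinely new ingredient appears on $\Omega_c^n\cap\Omega_c$. Since the noise term $\sigma_i\,dW_t$ in \eqref{snls1} is independent of $(\rho,S)$, its Stratonovich and It\^o forms coincide. Applying It\^o's formula to $|S^{\sigma^n}(t)-S^\sigma(t)|^2$ then produces, in addition to the drift-difference terms already handled in Proposition \ref{converge}, a quadratic-variation contribution $\int_0^t\sum_i|\sigma_i^n-\sigma_i|^2\,ds$ together with a martingale term $\int_0^t 2\sum_i(S_i^{\sigma^n}-S_i^\sigma)(\sigma_i^n-\sigma_i)\,dW_s$. Stopped at $\tau_c^n\wedge\tau_c$ the integrand of the martingale part is bounded by $4Nc\alpha$, so its expectation vanishes; the quadratic-variation term equals $\int_0^t\E[|\sigma^n-\sigma|^2]\,ds\to 0$. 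Coupling this with the analogous estimate for $|\sqrt{\rho^{\sigma^n}}-\sqrt{\rho^\sigma}|^2$ as in Proposition \ref{converge}, Gronwall's inequality with constants depending on $c$ closes the estimate, and letting $n\to\infty$ then $c\to\infty$ delivers the convergence.

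The main obstacle is verifying that the moments of the random upper bound $C(\omega)$ governing \eqref{low-est} and \eqref{rand} remain uniformly controlled both along $\{\sigma^n\}$ and along the Mazur convex combinations $\{\widetilde{\sigma}^m\}$. This reduces to showing that \eqref{energy} is uniform over $\widetilde{\mathcal U}$; applying It\^o's formula to the total Hamiltonian associated with \eqref{snls1} and invoking Burkholder's inequality together with the $L^\infty$ bound $|\sigma_j^n|\le\alpha$ yields the required bound. Once this uniform moment estimate is secured, the remainder of the argument is structurally identical to the drift-control case.
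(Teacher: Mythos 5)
Your proposal is correct and follows essentially the same route as the paper, which simply states that the result is obtained "by applying Proposition \ref{prop-snls} and repeating the steps in the proof of Proposition \ref{converge}" and then invoking convexity (for $\beta=0$) or Lemma \ref{reg} (for $\beta>0$). You have in fact supplied the details the paper omits---in particular the extra quadratic-variation and martingale terms arising from $(\sigma^n_i-\sigma_i)\,dW_t$ in the $S$-equation and the uniformity of the energy bound over $\widetilde{\mathcal U}$---and these are handled correctly.
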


\begin{proof}
By applying Proposition \ref{prop-snls} and repeating the steps in the proof of Proposition \ref{converge}, the lower continuity of $J$ when $\beta=0$ can be established. Therefore, the existence of optimal control is ensured by the convexity of $|u^{\sigma}_j(T)-f^1_j|^2.$
When $\beta>0$, the existence of optimal control can be guaranteed by Lemma \ref{reg}.
\end{proof}

From the above procedures, it can be seen that
all the results in this subsection still hold as long as 
the cost functional in \eqref{ocp1}  takes the form of $J(\sigma)=\E[f(u^{\sigma}(T))]+\beta \E\Big[\int_0^T \sum_{i=1}^N |\sigma_j(t)-Z_j(t)|^2dt\Big],
$
where the function $f$ has a lower bound and is lower semi-continuous convex. 
Meanwhile, we can also have the existence of optimal
potential and diffusion controls at the same time according to Theorems \ref{sop-exi} and \ref{sop-exi-dif}.

\section{Optimal condition for the stochastic control on graph}
\label{sec-condition}
As it has been pointed out in \cite{MR3462390}, compared to  nonlinear Schr\"odinger equations driven by additive noise, it is more difficult to investigate multiplicative noise. Beyond that,
for the nonlinear Schr\"odinger equation on graph, the appearance of the nonlinear Laplacian $\Delta_G$ makes it more challenging to characterize the optimal condition than the continuous control problem. 

In this section we mainly consider the following control problem
\begin{align}\label{ocp2}
J(\mathbb V)&:=\gamma \E\Big[ \sum_{i=1}^N |u^{\mathbb V}_i(T)-f^1_i|^2 \Big]+\beta_1\E \Big[\int_0^T \sum_{i=1}^N |u_i^{\mathbb V}(t)-Z_i^1(t) |^2\Big]dt
\\\nonumber 
&+\beta \E\Big[\int_0^T \sum_{i=1}^N |\mathbb V_i(t)-Z_i(t)|^2dt\Big] 
\end{align}
with the constraint \eqref{snls1} to illustrate how to derive the optimal condition on graph.  Here $\gamma\ge 0,\beta_1\ge0,\beta\ge 0$, and $Z^1$ is an $\mathcal F_t$-adapted and $L^2$-integrable process. When $\beta_1=0,$ \eqref{ocp2} degenerates into \eqref{ocp}. Our approach can be also extended to a more general smooth convex functional setting.

\subsection{Gradient formula}
In section \ref{sec-control}, we have shown the existence of optimal potential and diffusion controls. Furthermore, in this part we study the necessary optimal condition near the minimizer $\mathbb V^*$ of \eqref{ocp2} which is also called the gradient formula.

\begin{prop}\label{ne-cond}
Let $(u^{\mathbb V^*},\mathbb V^*)$ be the solution and optimal control of \eqref{ocp2}.
Then for $$\sup_{t\in [0,T]}|\mathbb V^{\epsilon}(t)-\mathbb V^*(t)|\le \epsilon, \quad \mathbb V^{\epsilon}\in \mathcal U,$$ it holds that
\begin{align}\label{err0}
\E\Big[1_{\Omega_c} \sup_{t\in [0,T]}|u^{\mathbb V^*}(t)-u^{\mathbb V^{\epsilon}}|^{p}\Big]\le C(c,u(0),T,p)\epsilon^p,
\end{align}
where $p\ge2$ and $\Omega_c=\{\sup_{i\le N}\sup_{s\in [0,T]}\frac 1{\rho_i^{\mathbb V^*}}+\sup_{i\le N}\sup_{s\in [0,T]}\frac 1{\rho_i^{\mathbb V^{\epsilon}}}\le c\}.$

Furthermore, suppose there exists $c(\epsilon)\to \infty$ such that the random variable $C(\omega)$, defined by \eqref{rand} with $\mathbb V\in \mathcal U$, satisfies 
\begin{align}\label{gra-cond}
\lim_{\epsilon \to 0}\Big[C(c(\epsilon),u(0),T,2)\epsilon+\frac 1\epsilon \mathbb P(C(\omega)\ge \frac 1{c_1}(\log(c(\epsilon))+\log(c_2)))\Big]=0,
\end{align}
then for any $\mathbb V \in \mathcal U,$ the following variational inequality  holds:
\begin{align}\label{cond}
&\lim_{c(\epsilon )\to \infty}\E\Big[1_{\Omega_{c(\epsilon)}} \Re \Big\{\int_0^T\sum_{i=1}^N\Big((u_i^{\mathbb V^*}(t)-Z_i^1(t))\overline{X_i(t)} +(\mathbb V_i^*(t)-Z_i(t))({\mathbb V_i(t)-\mathbb V^*_i(t)})\Big)dt \\\nonumber
&\quad+\sum_{i=1}^N (u_i^{\mathbb V^*}(T)-f_i^1(T))\overline{X_i(T)} \Big\}\Big]\ge 0,
\end{align}
where $X$ is the solution of the
following equation
\begin{align}\label{var-eq}
&dX_i(t)=\Big\{\frac {\bi}2 \sum_{j\in N(i)}\frac {\partial (\Delta_G u)_i}{\partial u_j}\Big|_{u=u^{\mathbb V^*}} X_j-\bi \mathbb V^*_i X_i-\bi \sum_{l=1}^N\mathbb W_{il}|u^{\mathbb V^*}_l|^2X_i\\\nonumber
&\qquad\qquad-2\bi \sum_{l=1 }^N\mathbb W_{il}\Re (\bar u^{\mathbb V^*}_l X_l)u^{\mathbb V^*}_i\Big\}dt+\Big\{-\bi u_i^{\mathbb V^*}(\mathbb V_i-\mathbb V_i^*)\Big\}dt +\Big\{ -\bi \sigma_i X_i\Big\}\circ dW(t)\\\nonumber
&X(0)=0.
\end{align}

\end{prop}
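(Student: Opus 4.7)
The plan splits into two pieces matching the two claims. For \eqref{err0}, I would work in Madelung coordinates and compare $(\rho^{\mathbb V^*},S^{\mathbb V^*})$ with $(\rho^{\mathbb V^\epsilon},S^{\mathbb V^\epsilon})$ from \eqref{snls1}. On $\Omega_c$ the density is bounded below by $1/c$, so every nonlinear drift in \eqref{snls1} --- in particular $\partial\theta_{ij}/\partial\rho_i$, $\partial I/\partial\rho_i$, and $\log\rho$ --- is locally Lipschitz with constant $C(c)$. I would apply It\^o's formula to $|\sqrt{\rho^{\mathbb V^*}}-\sqrt{\rho^{\mathbb V^\epsilon}}|^{2p}+|S^{\mathbb V^*}-S^{\mathbb V^\epsilon}|^{2p}$ localized up to the exit time from $\Omega_c$; since the multiplicative Stratonovich coefficient $\sigma_i$ is the same on both sides, the $dW$ term does not generate new forcing from $\mathbb V$, and the only non-cancelling input is the linear-potential difference, pointwise bounded by $\epsilon$. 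Burkholder and Gr\"onwall then deliver the $C(c)\epsilon^p$ bound, which translates back to $u=\sqrt{\rho}e^{\bi S}$ through standard Lipschitz bounds for the Madelung map on $\Omega_c$, giving \eqref{err0}.

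For the variational inequality \eqref{cond}, for each admissible $\mathbb V\in\mathcal U$ I would choose $\mathbb V^\epsilon:=\mathbb V^*+\epsilon'(\mathbb V-\mathbb V^*)$, with $\epsilon'$ rescaled so $\sup|\mathbb V^\epsilon-\mathbb V^*|\le\epsilon$; convexity of $\mathcal U$ ensures $\mathbb V^\epsilon\in\mathcal U$ for small $\epsilon$. Optimality of $\mathbb V^*$ gives $J(\mathbb V^\epsilon)\ge J(\mathbb V^*)$. Expanding the three squared terms in $J$ and dividing by $\epsilon'$,
\begin{align*}
\frac{J(\mathbb V^\epsilon)-J(\mathbb V^*)}{\epsilon'}&=2\Re\E\Big[\gamma\sum_i(u_i^{\mathbb V^*}(T)-f_i^1)\overline{X_i^\epsilon(T)}+\beta_1\int_0^T\sum_i(u_i^{\mathbb V^*}-Z_i^1)\overline{X_i^\epsilon}\,dt\\
&\quad+\beta\int_0^T\sum_i(\mathbb V_i^*-Z_i)(\mathbb V_i-\mathbb V_i^*)\,dt\Big]+O(\epsilon'),
\end{align*}
where $X^\epsilon:=(u^{\mathbb V^\epsilon}-u^{\mathbb V^*})/\epsilon'$. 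Subtracting the two copies of \eqref{snls-per-com} and Taylor-expanding the nonlinear Laplacian and the cubic interaction term about $u^{\mathbb V^*}$ produces a linear SDE for $X^\epsilon$ whose drift Jacobian yields the three deterministic bracket terms of \eqref{var-eq}, the forcing $-\bi u_i^{\mathbb V^*}(\mathbb V_i-\mathbb V_i^*)$ comes from the linear-potential difference, and the Stratonovich multiplicative term gives $-\bi\sigma_i X_i\circ dW$. Existence/uniqueness of $X$ and the convergence $X^\epsilon\to X$ on $\Omega_c$ follow from a contraction argument based on \eqref{err0}.

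The main obstacle is that every estimate above degrades as $\min_i\rho_i\to 0$, so convergence of $X^\epsilon\to X$ is uniform only on a fixed $\Omega_c$. I would therefore split
\begin{align*}
\frac{J(\mathbb V^\epsilon)-J(\mathbb V^*)}{\epsilon'}=\E\big[1_{\Omega_{c(\epsilon)}}(\cdots)\big]+\E\big[1_{\Omega\setminus\Omega_{c(\epsilon)}}(\cdots)\big].
\end{align*}
On the first piece, \eqref{err0} supplies the rate $C(c(\epsilon),u(0),T,2)\epsilon$, sent to $0$ by the first half of \eqref{gra-cond}. On the second piece, mass conservation $\sum_i|u_i|^2=1$ and Cauchy--Schwarz bound the integrand in $L^2(\Omega)$ uniformly in $\epsilon$, while the tail estimate \eqref{tail-est} controls $\mathbb P(\Omega\setminus\Omega_{c(\epsilon)})$ by $\mathbb P(C(\omega)\ge c_1^{-1}(\log c(\epsilon)+\log c_2))$; dividing by $\epsilon$ is precisely absorbed by the second half of \eqref{gra-cond}. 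Taking $\liminf_{\epsilon\to 0}$ on both sides of $J(\mathbb V^\epsilon)\ge J(\mathbb V^*)$ and then letting $c(\epsilon)\to\infty$ in the retained term yields \eqref{cond}.
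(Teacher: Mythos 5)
Your proposal follows essentially the same route as the paper's proof: a convex perturbation of $\mathbb V^*$, the Madelung-coordinate Gr\"onwall estimate on $\Omega_c$ (which is exactly how the paper's cited Proposition 4.1 argument works), a mean-value/Taylor expansion of the state equation producing \eqref{var-eq}, and the split of the difference quotient of $J$ into $\Omega_{c(\epsilon)}$ and its complement with the tail controlled via \eqref{tail-est} and condition \eqref{gra-cond}. No substantive difference from the paper's argument.
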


\begin{proof}
Since the admission control  set $\mathcal U$ is convex, we can use a convex perturbation to illustrate the procedures. Consider 
$\mathbb V^{\epsilon}=(1-\epsilon)\mathbb V^*+\epsilon \mathbb V.$
Define two processes $\xi(t):=\frac {u^{\mathbb V^{\epsilon}}-u^{\mathbb V}}{\epsilon}$ and $\delta \mathbb V:=\mathbb V-\mathbb V^{\epsilon}.$
Before the stopping time $\tau^c$, according to the proof of Proposition \ref{converge}, the equation of $X_i$ is well-posed since the coefficients of \eqref{var-eq} are globally Lipschitz.
By the mean value theorem, $\xi$ will satisfy 
\begin{align*}
&d\xi_i(t)=\Big\{\frac {\bi}2 \sum_{j\in N(i)} \int_0^1\frac {\partial (\Delta_G u)_i}{\partial u_j}\Big|_{u=u^{\mathbb V^*}+\kappa \epsilon \xi}d\kappa \xi_j-\int_0^1 \bi ( \mathbb V^*_i+\kappa \epsilon \delta \mathbb V_i) d\kappa \xi_i\\
&\qquad\qquad-\sum_{l=1}^N\bi \mathbb W_{il} \Big(\int_0^1 |u^{\mathbb V^*}_l+\kappa \epsilon \xi_i|^2 d\kappa\Big) \xi_i\\
&\qquad\qquad-2\sum_{l=1}^N\bi \mathbb W_{il}\int_0^1\Re ((\overline{u^{\mathbb V^*}_l+\epsilon \kappa \xi_l}) \xi_l)(u^{\mathbb V^*}_i +\epsilon \kappa \xi_i)d\kappa \Big\} dt\\\nonumber
&\qquad\qquad +\int_{0}^1\Big\{-\bi (u_i^{\mathbb V^*}+\epsilon \kappa \xi_i)(\mathbb V_i-\mathbb V_i^*)\Big\} d\kappa dt +\Big\{ -\bi \sigma_i \xi_i\Big\}\circ dW(t).
\end{align*}

Using the similar steps in the proof of Proposition \ref{converge}, on $\Omega_c$, it holds that for any $p\ge 2,$
\begin{align*}
\E\Big[1_{\Omega_c}\sup_{t\in [0,T]}|\xi(t)|^p\Big]+\E\Big[1_{\Omega_c}\sup_{t\in [0,T]}|X(t)|^p\Big]
&\le C(c,u(0),T) \E[(\int_0^T |\delta \mathbb V|^2ds)^{\frac p2}]
\end{align*} 
and that for $p\ge2,$
\begin{align*}
\E\Big[1_{\Omega_c}\sup_{t\in [0,T]}|\xi(t)-X(t)|^p\Big]&\le C(c,u(0),T,p).
\end{align*}
Thus, \eqref{err0} follows.
Here $C(c,u(0),T,p)$ is increasing with respect to $c$ satisfying $\lim_{c\to \infty} C(c,u(0),T,p)=+\infty.$ 

For convenience, let us denote 
\begin{align*}
J_{\Omega_c}(\mathbb V)
&:=\gamma \E\Big[ 1_{\Omega_c}\sum_{i=1}^N |u^{\mathbb V}_i(T)-f^1_i|^2 \Big]+\beta_1\E \Big[1_{\Omega_c}\int_0^T \sum_{i=1}^N|u^{\mathbb V}(t)-Z^1(t) |^2\Big]dt\\
&\quad
 +\beta \E\Big[1_{\Omega_c} \int_0^T \sum_{i=1}^N |\mathbb V_i(t)-Z_i(t)|^2dt\Big].
 \end{align*}
Due to the fact that $J(\mathbb V^*)\le J(\mathbb V^\epsilon),$ we obtain that 
\begin{align*}
&0\le  {J(\mathbb V^{\epsilon})-J(\mathbb V^*)}\\
&={J_{\Omega_c}(\mathbb V^{\epsilon})-J_{\Omega_c}(\mathbb V^*)}+ {J_{\Omega/\Omega_c}(\mathbb V^{\epsilon})-J_{\Omega/\Omega_c}(\mathbb V^*)}.
\end{align*}
Using the tail estimate of $1_{\Omega/\Omega_c}$ by the arguments in the proof of Proposition \ref{converge}, we get
\begin{align*}
\lim_{c\to \infty}\lim_{\epsilon \to 0}{J_{\Omega/\Omega_c}(\mathbb V^{\epsilon})-J_{\Omega/\Omega_c}(\mathbb V^*)}=0.
\end{align*}
To derive a necessary optimal condition, we need consider the speed of the convergence for $c$ and $\epsilon.$  By  \eqref{tail-est}, we have that 
\begin{align*}
J_{\Omega/\Omega_c}(\mathbb V^{\epsilon})
&\le C\mathbb P(C(\omega)\ge \frac 1{c_1}(\log(c)+\log(c_2))).
\end{align*}
By the Taylor expansion and \eqref{err0}, we have
\begin{align*}
0&\le \frac {1}{\epsilon}\Big[{J_{\Omega_c}(\mathbb V^{\epsilon})-J_{\Omega_c}(\mathbb V^*)}\Big]+ \frac 1{\epsilon}\Big[{J_{\Omega/\Omega_c}(\mathbb V^{\epsilon})-J_{\Omega/\Omega_c}(\mathbb V^*)}\Big]\\
&\le \E\Big[1_{\Omega_c} \Re \Big\{\int_0^T\sum_{i=1 }^N\Big((u_i^{\mathbb V^*}(t)-Z_i^1(t))\overline{X_i(t)} +(\mathbb V_i^*(t)-Z_i(t))({\mathbb V_i(t)-\mathbb V^*_i(t)})\Big)dt\\
&\quad+\sum_{i=1}^N (u_i^{\mathbb V^*}(T)-f_i^1(T))\overline{X_i(T)} \Big\}\Big]\\
&+ C(c,u(0),T,2)\epsilon+\frac 1\epsilon C\mathbb P(C(\omega)\ge \frac 1{c_1}(\log(c)+\log(c_2))).
\end{align*}
Using the condition \eqref{gra-cond}, there exists $c(\epsilon)\to \infty$ such that
\begin{align*}
&\lim_{c\to \infty}\E\Big[1_{\Omega_c} \Re \Big\{\int_0^T\sum_{i=1 }^N\Big((u_i^{\mathbb V^*}(t)-Z_i^1(t))\overline{X_i(t)} +(\mathbb V_i^*(t)-Z_i(t))({\mathbb V_i(t)-\mathbb V^*_i(t)})\Big)dt \\\nonumber
&\quad+\sum_{i=1}^N (u_i^{\mathbb V^*}(T)-f_i^1(T))\overline{X_i(T)} \Big\}\Big]\ge 0,
\end{align*}
which implies \eqref{cond}. 
\end{proof}

\begin{rk} If $\mathbb V^*$ is in the interior of $\mathcal U$, then \eqref{cond} becomes the equality. In general, the limit with respect to $c$ in \eqref{cond} does not commute with the expectation since the variational equation \eqref{cond} may not have a global estimate in the expectation sense and the coefficient is singular near boundary of $\mathcal{P}(G)$.
\end{rk}

Our approach is also applicable for the cost functional 
\begin{align}\label{gen-1}
J(\mathbb V)=\mathbb E [\int_0^Tg(u^{\mathbb V}(t),\mathbb V(t))dt+h(u^{\mathbb V}(T))],
\end{align}
where $g$ and $h$ are continuous convex and differentiable with bounded first derivatives.
Here we only present the result since the proof is similar to that of Proposition \ref{ne-cond}.

\begin{prop}\label{prop-gen1}
Assume that $g$ and $h$ are continuous differentiable with bounded first derivatives. Under the condition of Proposition \ref{ne-cond} with the cost functional \eqref{gen-1},
it holds that
\begin{align*}
&\lim_{c(\epsilon )\to \infty}\E\Big[1_{\Omega_{c(\epsilon)}}\Re  \Big\{\int_0^T\sum_{i=1}^N
g_{x_i}(u^{\mathbb V^*},\mathbb V^*)\overline{X_i(t)} +g_{y_i}(u^{\mathbb V^*},\mathbb V^*)({\mathbb V_i(t)-\mathbb V^*_i(t)})\Big)dt \\\nonumber
&\quad+\sum_{i=1}^N h_x(u^{\mathbb V^*})\overline{X_i(T)} \Big\}\Big]\ge 0,
\end{align*}
where $X_i$ is the solution of \eqref{var-eq}.
\end{prop}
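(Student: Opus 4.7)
The plan is to run the same convex-perturbation argument used for Proposition \ref{ne-cond}, with the only new ingredient being a first-order Taylor expansion of the nonlinear integrands $g$ and $h$. Fix $\mathbb V\in\mathcal U$ and set $\mathbb V^\epsilon=(1-\epsilon)\mathbb V^*+\epsilon\mathbb V$, which is admissible by convexity of $\mathcal U$. Let $\xi:=(u^{\mathbb V^\epsilon}-u^{\mathbb V^*})/\epsilon$ and let $X$ solve the variational equation \eqref{var-eq}. The key observation is that estimate \eqref{err0}, together with the auxiliary bound
$\E[1_{\Omega_c}\sup_{t\in[0,T]}|\xi(t)-X(t)|^p]\le C(c,u(0),T,p)\,\epsilon^p$
derived in the proof of Proposition \ref{ne-cond}, makes no use of the specific quadratic form of the cost; both estimates can therefore be taken as given here.

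Next I would perform a first-order Taylor expansion of $g$ and $h$ around $(u^{\mathbb V^*},\mathbb V^*)$ and $u^{\mathbb V^*}(T)$, respectively. Writing out the increment and using the complex structure via $\Re$-pairings yields
\begin{align*}
J(\mathbb V^\epsilon)-J(\mathbb V^*)&=\epsilon\,\Re\,\E\Big[\int_0^T\!\sum_{i=1}^N\Big(g_{x_i}(u^{\mathbb V^*},\mathbb V^*)\overline{X_i(t)}+g_{y_i}(u^{\mathbb V^*},\mathbb V^*)(\mathbb V_i(t)-\mathbb V^*_i(t))\Big)dt\\
&\qquad\qquad+\sum_{i=1}^N h_{x_i}(u^{\mathbb V^*}(T))\overline{X_i(T)}\Big]+R(\epsilon),
\end{align*}
where $R(\epsilon)$ collects (i) the error between $\epsilon\xi$ and $\epsilon X$, controlled by $o(\epsilon)$ on $\Omega_{c(\epsilon)}$ via the quantitative bound above and the boundedness of the first derivatives of $g,h$, and (ii) the Taylor remainders, controlled similarly because $|u^{\mathbb V^\epsilon}-u^{\mathbb V^*}|$ is small on $\Omega_{c(\epsilon)}$ by \eqref{err0}.

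For the tail part $\Omega\setminus\Omega_{c(\epsilon)}$, I would use that the first derivatives of $g$ and $h$ are bounded together with the mass conservation $\sum_i|u_i|^2\equiv1$ and the uniform bound $|\mathbb V_i|\le\alpha$ on $\mathcal U$. Consequently $|g(u^{\mathbb V},\mathbb V)|$ and $|h(u^{\mathbb V}(T))|$ are bounded by a deterministic constant, so that
\[
|J_{\Omega\setminus\Omega_{c(\epsilon)}}(\mathbb V^\epsilon)-J_{\Omega\setminus\Omega_{c(\epsilon)}}(\mathbb V^*)|\le C\,\mathbb P(\Omega\setminus\Omega_{c(\epsilon)})\le C\,\mathbb P\bigl(C(\omega)\ge\tfrac{1}{c_1}(\log c(\epsilon)+\log c_2)\bigr),
\]
exactly as in the tail estimate \eqref{tail-est}. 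Dividing by $\epsilon$, invoking the selection rule \eqref{gra-cond} to guarantee both $C(c(\epsilon),u(0),T,2)\epsilon\to 0$ and the tail probability divided by $\epsilon$ vanish, and passing to the limit $\epsilon\to 0$ in $J(\mathbb V^\epsilon)\ge J(\mathbb V^*)$ yields the desired variational inequality.

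The main obstacle is the same one resolved in Proposition \ref{ne-cond}: because the coefficients in \eqref{var-eq} are singular near the boundary of $\mathcal P(G)$ (through $\partial(\Delta_G u)/\partial u_j$ and the Fisher-information terms), one does not get a global estimate on $X$ in expectation, and the limit in $c$ must be kept inside the expectation. The verification that $R(\epsilon)/\epsilon\to0$ thus has to be done carefully on $\Omega_{c(\epsilon)}$ using the mean-value-theorem form of the coefficients (as in the proof of Proposition \ref{ne-cond}) and the Lipschitz bounds available there, while the tail contribution is absorbed using the boundedness of $\nabla g,\nabla h$. Once both pieces are under control the passage to the limit is routine.
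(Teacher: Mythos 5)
Your proposal is correct and follows exactly the route the paper intends: the paper omits the proof of this proposition entirely, stating only that it is ``similar to that of Proposition \ref{ne-cond},'' and your argument is precisely that adaptation (convex perturbation, the $\xi$ versus $X$ estimates on $\Omega_{c(\epsilon)}$, first-order Taylor expansion of $g$ and $h$ in place of the explicit quadratic expansion, and the same tail estimate combined with the selection rule \eqref{gra-cond}). The details you supply --- in particular using boundedness of $\nabla g,\nabla h$ together with mass conservation to control the tail contribution --- are the natural way to fill in what the paper leaves implicit.
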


Similarly, we could consider the diffusion control problem,  
\begin{align}\label{ocp3}
J(\sigma)&=\mathbb E [\int_0^Tg(u^{\sigma}(t),\sigma (t))dt+h(u^{\sigma }(T))].
\end{align}
with the constraint \eqref{snls1} and $\sigma\in \widetilde{\mathcal U}$. We state its optimal condition as follows and omit the detailed proof.

\begin{prop}\label{ne-cond1}
Let $(u^{\sigma^*},\sigma^*)$ be an optimal control of \eqref{ocp3}.
Then for $|\sigma^{\epsilon}-\sigma^*|\le \epsilon, \sigma^{\epsilon}\in \mathcal U$, it holds that for $p\ge2,$
\begin{align}\label{err1}
\E\Big[1_{\Omega_c} \sup_{t\in [0,T]}|u^{\sigma^*}(t)-u^{\sigma^{\epsilon}}|^2\Big]\le C(c,u(0),T,p)\epsilon^p,
\end{align}
where $\Omega_c=\{\sup_{i\le N}\sup_{s\in [0,T]}\frac 1{\rho_i^{\sigma^*}}+\sup_{i\le N}\sup_{s\in [0,T]}\frac 1{\rho_i^{\sigma^{\epsilon}}}\le c\}.$

Furthermore, suppose that the random variable $C(\omega)$, defined by \eqref{rand} with $\sigma \in \mathcal U$, satisfies that there exists $c(\epsilon)\to \infty$ such that
\begin{align}\label{gra-cond1}
\lim_{\epsilon \to 0}\Big[C(c(\epsilon),u(0),T,2)\epsilon+\frac 1\epsilon \mathbb P(C(\omega)\ge \frac 1{c_1}(\log(c(\epsilon))+\log(c_2)))\Big]=0.
\end{align}

Suppose that $(u^{\sigma^*},\sigma^*)$ is an optimal control of \eqref{ocp3}. Then for any $\sigma \in \mathcal U,$ the following variational inequality holds:
\begin{align}\label{cond1}
&\lim_{c\to \infty}\E\Big[1_{\Omega_c} \Re  \Big\{\int_0^T\sum_{i=1}^N\Big(g_{x_i}(u^{\sigma^*},\sigma)\overline{X_i(t)} +g_{y_i}(u^{\sigma^*},\sigma^*)({\sigma_i(t)-\sigma^*_i(t)})\Big)dt \\\nonumber
&\quad+\sum_{i=1}^N h_{x_i}(u_i^{\sigma^*}(T))\overline{X_i(T)} \Big\}\Big]\ge 0,
\end{align}
where $X$ is the solution of the following equation 
\begin{align}\label{var-eq1}
&dX_i(t)=\Big\{\frac {\bi}2 \sum_{j\in N(i)}\frac {\partial (\Delta_G u)_i}{\partial u_j}\Big|_{u=u^{\sigma^*}} X_j-\bi \mathbb V_i X_i-\bi \sum_{l=1}^N\mathbb W_{il}|u^{\sigma^*}_l|^2X_i-2\bi \sum_{l=1 }^N\mathbb W_{il}\Re (\bar u^{\sigma^*}_l X_l)u^{\sigma^*}_i\Big\}dt\\\nonumber
&\qquad\qquad +\Big\{ -\bi \sigma_i^* X_i-\bi u^{\sigma^*}_i(\sigma_i-\sigma^*_i) \Big\}\circ dW(t)\\\nonumber
&X(0)=0.
\end{align}

\end{prop}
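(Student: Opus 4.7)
The plan is to mirror the proof of Proposition~\ref{ne-cond}, the substantive change being that the perturbation now enters through the diffusion coefficient rather than the drift, so the new term $-\bi u^{\sigma^*}_i(\sigma_i-\sigma^*_i)$ appears inside the Stratonovich differential of \eqref{var-eq1} rather than in the bounded-variation part.

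First, I would introduce the convex perturbation $\sigma^\epsilon=(1-\epsilon)\sigma^*+\epsilon\sigma$ and set $\xi(t):=\epsilon^{-1}(u^{\sigma^\epsilon}(t)-u^{\sigma^*}(t))$, $\delta\sigma:=\sigma-\sigma^*$. Subtracting the equations for $u^{\sigma^\epsilon}$ and $u^{\sigma^*}$ and applying the mean value theorem to $\Delta_G u$, to the interaction term $u_i\sum_l\mathbb W_{il}|u_l|^2$, and to the noise coefficient $\sigma_i u_i$, I would obtain an SDE for $\xi$ whose coefficients depend on $u^{\sigma^*}+\kappa\epsilon\xi$, $\kappa\in[0,1]$, with driver $-\bi\sigma^\epsilon_i\xi_i-\bi u^{\sigma^*}_i\delta\sigma_i+O(\epsilon)$ entering through $\circ dW$. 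Before the stopping time $\tau_c$, by Corollary~\ref{cor-low} and the arguments of Proposition~\ref{converge}, the coefficients of both the $\xi$ equation and \eqref{var-eq1} are globally Lipschitz with constants depending only on $c$, $u(0)$, and the uniform bound $\alpha$ on $\sigma^\epsilon$ inherited from $\widetilde{\mathcal U}$; this gives well-posedness of \eqref{var-eq1} on $\Omega_c$.

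Second, converting the Stratonovich term to It\^o, applying It\^o's formula to $|\xi|^p$, the Burkholder--Davis--Gundy inequality, and Gronwall's inequality, I would establish
\begin{align*}
\E\bigl[1_{\Omega_c}\sup_{t\in[0,T]}|\xi(t)|^p\bigr]+\E\bigl[1_{\Omega_c}\sup_{t\in[0,T]}|X(t)|^p\bigr]\le C(c,u(0),T,p)\,\E\Bigl[\Bigl(\int_0^T|\delta\sigma|^2\,ds\Bigr)^{p/2}\Bigr],
\end{align*}
from which \eqref{err1} follows, together with a parallel $\E[1_{\Omega_c}\sup_t|\xi-X|^p]\to0$ as $\epsilon\to0$. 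Third, exploiting the optimality $J(\sigma^*)\le J(\sigma^\epsilon)$, I would split the difference as $J_{\Omega_c}(\sigma^\epsilon)-J_{\Omega_c}(\sigma^*)+J_{\Omega\setminus\Omega_c}(\sigma^\epsilon)-J_{\Omega\setminus\Omega_c}(\sigma^*)$, divide by $\epsilon$, and Taylor expand $g$ and $h$ against the error estimate just obtained. The tail bound \eqref{tail-est} applied to $\rho^{\sigma^\epsilon}$ controls the $\Omega\setminus\Omega_c$ contribution by a multiple of $\mathbb P(C(\omega)\ge c_1^{-1}(\log c+\log c_2))$. Choosing $c=c(\epsilon)\to\infty$ under the compatibility hypothesis \eqref{gra-cond1}, both the $O(C(c,u(0),T,2)\epsilon)$ Taylor remainder and the $\epsilon^{-1}$-scaled tail vanish in the limit, yielding \eqref{cond1}.

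The main obstacle is maintaining uniform-in-$\epsilon$ bounds for the $\xi$ equation when the perturbation sits in the noise: the It\^o correction produced by the Stratonovich-to-It\^o conversion generates terms of the form $|\sigma^\epsilon|^2\xi$ and $\sigma^\epsilon u^{\sigma^*}\delta\sigma$, whose control relies on the uniform bound $|\sigma^\epsilon_i|\le\alpha$ provided by $\widetilde{\mathcal U}$, together with the mass conservation $\sum_i|u_i^{\sigma^*}|^2=1$ from Proposition~\ref{prop-snls}(i). Once these are in place, the BDG and Gronwall steps, the tail estimate, and the rate-balancing argument go through verbatim from Proposition~\ref{ne-cond}, and the only place the diffusion perturbation is visible in the final identity is the replacement of the drift perturbation term by $-\bi u^{\sigma^*}_i(\sigma_i-\sigma^*_i)\circ dW$ in \eqref{var-eq1}.
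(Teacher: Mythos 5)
Your proposal is correct and follows essentially the same route the paper intends: the paper omits the detailed proof of Proposition~\ref{ne-cond1}, stating only that it parallels Proposition~\ref{ne-cond}, and your argument is precisely that adaptation --- convex perturbation, mean-value-theorem linearization, localization on $\Omega_c$ where the coefficients are Lipschitz, BDG/Gronwall moment bounds for $\xi$ and $X$, the tail estimate \eqref{tail-est} for $\Omega\setminus\Omega_c$, and the rate balancing via \eqref{gra-cond1}. You also correctly identify the one substantive difference (the perturbation entering the Stratonovich term, whose It\^o correction is controlled by the uniform bound $|\sigma_i|\le\alpha$ and mass conservation), so nothing further is needed.
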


The gradient formula characterizes the necessary optimal condition of the potential and diffusion control problems. However, such condition is not very useful in practice, because the variational solution depends on the control variable $\mathbb V$ or $\sigma.$

\subsection{Backward SDE}

In this subsection, we aim to give a more in-depth description on the optimal condition via the forward and backward stochastic differential equations. 
To better illustrate the procedure while clearly explaining the main idea, we use the control problem \eqref{ocp2} with $\gamma=\beta=\beta_1=1$ as an example. To this end, we need {\it a priori} estimate of the variational solution $X$ of \eqref{var-eq} such that the limit with respect to $c$ commutes with the expectation in \eqref{cond}.

\begin{prop}\label{uni-est}
Let $\sigma$ be a constant potential, i.e., $\sigma_i=\sigma_j$, and $\rho(0)\in \mathcal P_{o}(G)$, $S(0)\in \mathbb R^N$. Assume that $\mathbb V\in \mathcal U.$
Then it holds that for $p\ge 2,$
\begin{align}\label{pri-est}
&\E\Big[\sup_{t\in [0,T]}\|u^{\mathbb V}(t)\|^p\Big]\le C(u(0),T,p,\alpha),\\\nonumber
&\E\Big[\sup_{t\in [0,T]}\|X(t)\|^p\Big]\le C(u(0),T,p,\alpha).
\end{align}
\end{prop}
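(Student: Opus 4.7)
Proof proposal. The bound for $u^{\mathbb V}$ is immediate from Proposition \ref{prop-snls}(i): the pathwise mass conservation $\sum_{i=1}^N |u_i^{\mathbb V}(t)|^2 = \sum_{i=1}^N |u_i(0)|^2$ controls the $\ell^2$-norm uniformly on $[0,T]$ by a constant depending only on $u(0)$, so the first estimate is trivial. The substantive task is the bound for the variational solution $X$, where my plan is to use the hypothesis that $\sigma$ is a constant potential to render the coefficients of \eqref{var-eq} pathwise-deterministic, and then apply Gronwall to a pathwise differential inequality for $\|X\|^2$.

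The key structural observation is that when $\sigma_i \equiv \sigma$, the noise term $\sigma\,dW$ in the $S$-equation of \eqref{snls1} is a uniform phase shift: setting $\tilde S_i := S_i^{\mathbb V^*} + \sigma W(t)$, the stochastic increment is absorbed and $(\rho^{\mathbb V^*}, \tilde S)$ satisfies a \emph{pathwise deterministic} Hamiltonian ODE whose right-hand side depends only on $\rho^{\mathbb V^*}$ and on the phase differences $\tilde S_i - \tilde S_j = S_i^{\mathbb V^*} - S_j^{\mathbb V^*}$. Combining the resulting pathwise energy law with the lower-bound argument of \cite[Theorem 4.1]{CLZ2022} and using $|\mathbb V_i^*| \le \alpha$, I obtain deterministic a priori estimates
\[
\inf_{t,\, i} \rho_i^{\mathbb V^*}(t) \ge c(u(0), T, \alpha) > 0, \qquad \max_{t,\,(i,j) \in E} |S_i^{\mathbb V^*}(t) - S_j^{\mathbb V^*}(t)| \le C(u(0), T, \alpha).
\]
The equivariance $\Delta_G(e^{\bi \phi} u) = e^{\bi \phi} \Delta_G u$ forces the matrix $A_{ij} := \partial (\Delta_G u)_i / \partial u_j|_{u = u^{\mathbb V^*}}$ to depend only on $\rho^{\mathbb V^*}$ and the same phase differences, so the bounds above give a deterministic uniform-in-$(t, \omega)$ bound on $A_{ij}$ by $C(u(0), T, \alpha)$.

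With these bounds in hand, I apply Ito's formula to $\|X(t)\|^2 = \sum_i |X_i(t)|^2$. Because $\sigma$ is a real constant, the Stratonovich-to-Ito conversion of $-\bi \sigma X_i \circ dW$ produces a drift whose pairing with $\bar X_i$ yields $-\sigma^2 |X_i|^2\,dt$, which exactly cancels the Ito quadratic-variation contribution $|{-}\bi \sigma X_i|^2\,dt = \sigma^2 |X_i|^2\,dt$, while the martingale increment $2\Re(\bar X_i\,(-\bi \sigma X_i))\,dW$ vanishes pointwise. Hence $d\|X\|^2 = 2\sum_i \Re(\overline{X_i}\,b_i)\,dt$ pathwise, where among the terms of $b_i$ the contributions $-\bi \mathbb V^*_i X_i$ and $-\bi \sum_l \mathbb W_{il} |u_l^{\mathbb V^*}|^2 X_i$ are purely imaginary and drop out, the Hartree term $-2\bi \sum_l \mathbb W_{il} \Re(\bar u_l^{\mathbb V^*} X_l) u_i^{\mathbb V^*}$ and the linearized-Laplacian term are bounded by $C(u(0), T, \alpha)\|X\|^2$ via the uniform bound on $A$ and $|u^{\mathbb V^*}| \le 1$, and the forcing $-\bi u_i^{\mathbb V^*}(\mathbb V_i - \mathbb V_i^*)$ is absorbed by Cauchy--Schwarz into $C(\|X\|^2 + \alpha^2)$. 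Gronwall's lemma applied to the resulting pathwise inequality $\tfrac{d}{dt}\|X\|^2 \le C(u(0), T, \alpha)(1 + \|X\|^2)$ gives a deterministic bound on $\|X(t)\|$, which after raising to power $p/2$ yields the claim. The central obstacle is the deterministic reduction in the second paragraph: for non-constant $\sigma$ the lower bound on $\rho^{\mathbb V^*}$ holds only with finite moments, as in \eqref{tail-est}, and $A_{ij}$ would fail to admit any deterministic uniform bound, so the Gronwall step could not close uniformly over $\mathbb V^* \in \mathcal U$.
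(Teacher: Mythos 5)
Your proposal is correct and follows essentially the same route as the paper: the constant-$\sigma$ hypothesis is used to absorb the noise into the phase via $\tilde S_i = S_i + \sigma W(t)$, reducing \eqref{snls1} to a pathwise Hamiltonian system whose energy law plus the Fisher-information term gives a uniform positive lower bound on $\rho^{\mathbb V}$, after which the coefficients of \eqref{var-eq} are bounded and Lipschitz and Gronwall closes the estimate for $X$. Your write-up is somewhat more explicit than the paper's (the phase-equivariance of $\Delta_G$, the cancellation of the Stratonovich correction against the quadratic variation, and the role of $|\mathbb V|\le\alpha$ in the energy inequality for time-dependent potentials), but these are elaborations of the same argument rather than a different one.
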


\begin{proof}
According to Proposition \ref{prop-snls} and the proof of Proposition \ref{converge}, it suffices to prove a uniform lower bound estimate of the density function $\rho^{\mathbb V}(t)=|u^{\mathbb V}(t)|^2.$  Since $\sigma_i=\sigma_j$, we denote $\sigma_i=\widetilde \sigma.$ Introducing $\widetilde S_i=S_i+\widetilde \sigma W(t)$,  \eqref{snls1} can be rewritten as 
\begin{align*}
& d\rho_i=\sum_{j\in N(i)}\omega_{ij}(\widetilde S_i-\widetilde S_j)\theta_{ij}(\rho) dt;\\\nonumber
& d \widetilde S_i+(\frac 12\sum_{j\in N(i)}\omega_{ij}(\widetilde S_i-\widetilde S_j)^2 \frac {\partial \theta_{ij}}{\partial \rho_i}+\frac 18 \frac {\partial}{\partial \rho_i} I(\rho)+\mathbb V_j
+\sum_{j\in N(i)}\mathbb W_{ij}\rho_j)dt=0,
\end{align*} 
which is a nonlinear Schr\"odinger equation with random inputs. Thus it follows that 
\begin{align*}
\mathcal H(\rho(t),\widetilde S(t))&:=\frac 12 \<\nabla_G \widetilde S,\nabla_G \widetilde S\>_{\theta(\rho(t))}+\mathcal V(\rho(t))+\mathcal W(\rho(t))+\frac 18 I(\rho(t))\\
&=\mathcal H(\rho(0),S(0))<\infty, \; \text{a.s.}
\end{align*}
The property of Fisher information yields that there exists a constant $c_{low}>0$ such that 
\begin{align*}
\inf_{t\ge 0}\min_{i\le N} \rho_i(t)\ge c_{low}>0, \text{a.s.}
\end{align*}
Therefore we have $\Omega_{\frac 1{c_{low}}}=\Omega$ and 
\begin{align*}
&\E\Big[\sup_{t\in [0,T]}\|u^{\mathbb V}(t)\|^p\Big]\le C(u(0),T,p,\alpha,c_{low}).
\end{align*}
The lower bound of the density function also implies that  the coefficient of \eqref{var-eq} are bounded and Lipschitz. By repeating similar steps in the proof of Proposition \ref{ne-cond}, we complete the proof.

\end{proof}

Thanks to the lower bound estimate of the density function, we are also able to derive the corresponding backward stochastic differential equation, which is also called the adjoint equation of \eqref{var-eq}.

\begin{cor}\label{well-bsde}
Let the condition of Proposition \ref{ne-cond} hold. Let $(u^{\mathbb V^*},\mathbb V^*)$ be an optimal control of \eqref{ocp2}. Then there exists an adapted solution $(Y,\mathbb Z)$ of the following system,
\begin{align}\label{bsde}
&dY_i(t)=-\Big\{\frac {\bi}2 \sum_{i\in N(j)}\overline{\frac {\partial (\Delta_G u)_j}{\partial u_i}}\Big|_{u=u^{\mathbb V^*}} Y_j-\bi \mathbb V^*_i Y_i-\bi \sum_{l=1}^N\mathbb W_{il}|u^{\mathbb V^*}_l|^2Y_i-2\sum_{l=1}^N\mathbb W_{il}\Re (\bi  u^{\mathbb V^*}_l  \bar Y_l)u^{\mathbb V^*}_i\Big\}dt\\\nonumber
&\qquad\qquad +\frac 12 \sigma_i^2 Y_i(t) dt+ \bi \sigma_i \mathbb Z_i dt + 2(u_i^{\mathbb V^*}-Z_i^1)dt +\mathbb Z_i(t) dW(t),\\\nonumber
&Y(T)=-2u^{\mathbb V^*}(T)+2f_1(T).
\end{align} 
\end{cor}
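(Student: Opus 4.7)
The plan is to recognize \eqref{bsde} as a linear BSDE with bounded adapted coefficients and square-integrable data, and then invoke the classical Pardoux--Peng existence theory after splitting into real and imaginary parts.

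First, I would check that, under the assumption of Proposition \ref{uni-est} that $\sigma$ is a constant potential (which is the regime in which this subsection operates), the coefficients of the driver are bounded adapted processes. Indeed, Proposition \ref{uni-est} produces a deterministic lower bound $c_{low} > 0$ on the density $|u^{\mathbb V^*}_j(t)|^2$ uniformly in $(t,\omega)$. Together with the mass conservation $\sum_j |u^{\mathbb V^*}_j(t)|^2 = 1$, this yields uniform two-sided bounds on $|u^{\mathbb V^*}_j|$. Consequently, the partial derivatives $\partial(\Delta_G u)_j/\partial u_i|_{u=u^{\mathbb V^*}}$, which through formula \eqref{non-lap} only involve terms of the form $1/|u_j|^2$ and $\log|u_j|$, become uniformly bounded. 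The remaining $Y$-coefficients are also bounded, since $|\mathbb V^*_i| \le \alpha$ on $\mathcal U$, $|u^{\mathbb V^*}_l|^2 \le 1$, and $\sigma$ is a constant; the $\mathbb Z$-coefficient $\bi\sigma_i$ is a constant as well.

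Next I would verify the integrability of the data. The terminal value $Y(T) = -2 u^{\mathbb V^*}(T) + 2 f_1(T)$ lies in $L^2(\Omega, \mathcal F_T; \mathbb C^N)$ because $u^{\mathbb V^*}$ has finite moments by Proposition \ref{uni-est} and $\|f_1\|_{L^2(\Omega;\mathbb C^N)}<\infty$ by hypothesis. The non-homogeneous forcing $2(u^{\mathbb V^*}_i - Z^1_i)$ is square-integrable on $\Omega\times[0,T]$, since $u^{\mathbb V^*}\in L^2(\Omega;\mathcal C([0,T];\mathbb C^N))$ and $Z^1$ is $\mathcal F_t$-adapted and $L^2$-integrable. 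With this in hand, \eqref{bsde} can be written in the standard integral form
\[
Y_i(t) = Y_i(T) + \int_t^T F_i(s,Y(s),\mathbb Z(s))\,ds - \int_t^T \mathbb Z_i(s)\,dW(s),
\]
with driver $F$ affine in $(Y,\mathbb Z)$ with uniformly bounded coefficient processes. To accommodate the anti-linear term $\bar Y_l$ appearing through $\Re(\bi u^{\mathbb V^*}_l \bar Y_l)$, I would decompose $Y$ and $\mathbb Z$ into their real and imaginary parts and view the system as a real linear $2N$-dimensional BSDE. Pardoux--Peng's theorem then delivers a unique adapted solution $(Y,\mathbb Z) \in L^2(\Omega;\mathcal C([0,T];\mathbb C^N))\times L^2(\Omega\times[0,T];\mathbb C^N)$.

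The main obstacle is precisely the uniform boundedness of $\partial(\Delta_G u)_j/\partial u_i|_{u=u^{\mathbb V^*}}$: without a positive lower bound on the density, the logarithmic and reciprocal terms hidden in $\Delta_G$ would make the driver only locally Lipschitz, and classical linear BSDE theory would not apply. This is exactly the reason the constant-$\sigma$ hypothesis is enforced, since it is the sole mechanism through which Proposition \ref{uni-est} furnishes a deterministic lower bound on the density of the optimal state $u^{\mathbb V^*}$ and thereby reduces the adjoint equation to a standard Pardoux--Peng problem.
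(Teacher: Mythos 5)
Your proposal matches the paper's proof in approach: the paper likewise deduces from Proposition \ref{uni-est} (the constant-$\sigma$ lower bound on the density) that the coefficients of \eqref{bsde} are bounded and Lipschitz, and then invokes standard linear BSDE well-posedness (citing Yong rather than Pardoux--Peng), so your write-up is essentially a more detailed version of the same argument. The one detail worth adding is that $\partial(\Delta_G u)_j/\partial u_i$ also involves the phase differences $\Im(\log u_j-\log u_l)=S_j-S_l$, whose boundedness comes from the kinetic-energy part of the conserved Hamiltonian in Proposition \ref{uni-est} rather than from the two-sided density bounds alone.
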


\begin{proof}
Thanks to Proposition \ref{uni-est}, the coefficients of \eqref{bsde} are Lipschitz and bounded.
Then the standard arguments in \cite[section 3]{Yong20} yield the well-posedness of the linear BSDE  \eqref{bsde}, that is, there exists a unique adapted solution $(Y,\mathbb Z).$ 
\end{proof}

Based on the above results, we are ready to characterize the optimal condition by a coupled forward--backward SDE system. 

\begin{tm}\label{fbsde-op}
Let the condition of Proposition \ref{uni-est} hold. Then the optimal control pair $(u^{\mathbb V^*},\mathbb V^*)$ satisfies the generalized stochastic Hamiltonian system consisting of \eqref{snls1}, \eqref{bsde}
with $u(0)=\sqrt{\rho(0)}e^{\bi S(0)}$, $Y(T)=-2u^{\mathbb V^*}(T)+2f_1(T)$
and the stationary condition, i.e., for arbitrary $\mathbb V$, 
\begin{align*}
\Re\<-\bi u^{\mathbb V^*} Y+2(\mathbb V^*-Z),\mathbb V-\mathbb V^*\>\ge 0, \; \text{a.e.} \; t\in [0,T],\;\; \text{a.s.}
\end{align*}
\end{tm}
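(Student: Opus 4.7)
Items (i) and (ii) of the theorem are essentially immediate: the state equation \eqref{snls1} is satisfied by $u^{\mathbb V^*}$ by definition, and the existence of an adapted pair $(Y,\mathbb Z)$ solving the adjoint BSDE \eqref{bsde} with $Y(T)=-2u^{\mathbb V^*}(T)+2f_1(T)$ is furnished by Corollary \ref{well-bsde}. The only real work is to derive the stationary condition, and the plan is to use an It\^o integration by parts on the duality pairing $\Re\sum_i X_i(t)\bar Y_i(t)$ to eliminate the variational process $X$ from the gradient formula of Proposition \ref{ne-cond}.

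First I would invoke Proposition \ref{uni-est}: the constant $\sigma$ hypothesis gives a deterministic lower bound on $\rho^{\mathbb V^*}$ (so $\Omega_c=\Omega$ for $c$ large) together with uniform moment bounds on $X$, hence the indicator $\mathbf 1_{\Omega_{c(\epsilon)}}$ in \eqref{cond} can be dropped by dominated convergence and the limit in $c$ becomes trivial. The gradient formula then reads, for every $\mathbb V\in\mathcal U$,
\[
\E\Big[\Re\Big\{\int_0^T\!\sum_i\big((u_i^{\mathbb V^*}-Z_i^1)\bar X_i+(\mathbb V_i^*-Z_i)(\mathbb V_i-\mathbb V_i^*)\big)\,dt+\sum_i(u_i^{\mathbb V^*}(T)-f_i^1(T))\bar X_i(T)\Big\}\Big]\ge 0.
\]

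The main step is It\^o's formula applied to $\Re\sum_i X_i(t)\bar Y_i(t)$. The generator of \eqref{bsde} has been set up precisely as the formal adjoint of the linearised operator in \eqref{var-eq}: after summation over $i$, the drift contributions from $(dX)\bar Y+X\,d\bar Y$ cancel, using the conjugate-symmetry of the Jacobian of $\Delta_G$ at $u^{\mathbb V^*}$ together with $\mathbb W_{il}=\mathbb W_{li}$; the Stratonovich-to-It\^o correction $\tfrac12\sigma_i^2$ in the $Y$-drift absorbs the It\^o correction coming from the $-\bi\sigma_i X_i\circ dW$ term of \eqref{var-eq}; and the $\bi\sigma_i\mathbb Z_i$ term compensates the cross-variation between $X$ and $Y$. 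Only the inhomogeneous forcing terms survive. Using $X(0)=0$, $Y(T)=-2(u^{\mathbb V^*}(T)-f_1(T))$ and taking expectations (the local martingale parts are true martingales by the uniform bounds), one arrives at
\[
-2\,\E\Re\!\sum_i\! X_i(T)\overline{(u_i^{\mathbb V^*}(T)-f_i^1(T))}=\E\Re\!\int_0^T\!\sum_i\!\Big(-\bi u_i^{\mathbb V^*}(\mathbb V_i-\mathbb V_i^*)\bar Y_i+2X_i\overline{(u_i^{\mathbb V^*}-Z_i^1)}\Big)dt.
\]
Substituting this identity into the gradient formula cancels every occurrence of $X$ and leaves $\E[\Re\int_0^T\sum_i(-\bi u_i^{\mathbb V^*}Y_i+2(\mathbb V_i^*-Z_i))(\mathbb V_i-\mathbb V_i^*)\,dt]\ge 0$ for every $\mathbb V\in\mathcal U$. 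Because $\mathcal U$ is convex and stable under multiplication by indicators of predictable sets, the localisation $\mathbb V=\mathbf 1_A\widetilde{\mathbb V}+\mathbf 1_{A^c}\mathbb V^*$, for arbitrary predictable $A\subset\Omega\times[0,T]$ and $\widetilde{\mathbb V}\in\mathcal U$, upgrades this integrated inequality to the pointwise stationary condition stated in the theorem.

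The principal obstacle will be rigorously verifying the cancellation identities in the It\^o expansion. Because $\Delta_G u$ is built from $\log u$, $|u|^2$ and the density-weighted graph weights $\theta_{jl}(\rho)$, the Jacobian $\partial(\Delta_G u)_i/\partial u_j|_{u^{\mathbb V^*}}$ is a nonstandard complex linear map; checking the transpose identity $\sum_{i,j}\big(\partial_j(\Delta_G u)_i\big)X_j\bar Y_i=\sum_{i,j}X_i\overline{\big(\partial_i(\Delta_G u)_j\big)Y_j}$ requires a discrete integration-by-parts on the graph, which then has to be reconciled consistently with the Stratonovich/It\^o conversions appearing in both \eqref{var-eq} and \eqref{bsde}. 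Once this adjoint identity is in hand, the remaining manipulations are standard stochastic analysis under the uniform estimates provided by Proposition \ref{uni-est}.
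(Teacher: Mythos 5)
Your proposal is correct and follows essentially the same route as the paper: invoke Proposition \ref{uni-est} to remove the truncation in the gradient formula, apply It\^o's formula to the duality pairing $\Re\<X,Y\>$ so that the adjoint structure of \eqref{bsde} cancels all terms except the forcings, and substitute the resulting identity into \eqref{cond} to eliminate $X$ and read off the stationary condition. Your closing remarks on verifying the adjoint identity for the Jacobian of $\Delta_G$ and on localising the integrated inequality to a pointwise one are useful elaborations of steps the paper performs (or asserts) without comment, but they do not constitute a different argument.
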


\begin{proof}
For convenience, let us denote $\Re \<X,Y\>:=\Re (\sum_{i=1}^N \bar X_iY_i)$ and $\Re(a,b)=\Re(\bar a b).$
Applying It\^o's formula, we obtain that 
\begin{align*}
&d \Re \<X(t),Y(t)\>\\
&=\sum_{i=1}^N\Big\{\Re \Big(\frac {\bi}2 \sum_{j\in N(i)}\frac {\partial (\Delta_G u)_i}{\partial u_j}\Big|_{u=u^{\mathbb V^*}} X_j, Y_i\Big)-\Re(\bi \mathbb V^*_i X_i,Y_i)-\Re(\bi \sum_{l=1 }^N \mathbb W_{il}|u^{\mathbb V^*}_l|^2X_i,Y_i)\\
&-2\Re(\sum_{l=1}^N\bi \mathbb W_{il}\Re (\bar u^{\mathbb V^*}_l X_l)u^{\mathbb V^*}_i,Y_i)\Big\}dt+\sum_{i=1}^N \Big\{\Re\Big(-\frac 12 \sigma_i^2 X_i,Y_i\Big)+\sum_{i=1}^N \Re\Big(-\bi u_i^{\mathbb V^*}(\mathbb V_i-\mathbb V_i^*), Y_i\Big)\Big\}dt \\
&+\sum_{i=1}^n\Big\{\Re\Big(-\frac {\bi}2 \sum_{i\in N(j)} \overline{\frac {\partial (\Delta_G u)_j}{\partial u_i}\Big|_{u=u^{\mathbb V^*}}} Y_j, X_i\Big)+
\Re\Big(\bi \mathbb V^*_iY_i,X_i\Big)\\
&+\Re\Big(\bi \sum_{l=1}^N\mathbb W_{il}|u^{\mathbb V^*}_l|^2Y_i+2 \sum_{l=1}^N\mathbb W_{il}\Re (\bi  u^{\mathbb V^*}_l \bar Y_l)u^{\mathbb V^*}_i,X_i\Big)\Big\}dt\\\nonumber
& +\sum_{l=1}^N \Big\{\Re\Big(\frac 12 \sigma_i^2 Y_i ,X_i\Big)+ \Re(\bi \sigma_i \mathbb Z_i ,X_i)+ 2\Re \Big(u_i^{\mathbb V^*}-Z_i^1,X_i\Big)\Big\}dt\\
&+\sum_{l=1}^N \Big\{\Re \Big(\mathbb Z_i(t), Y_i(t)\Big)+\Re\Big(-\bi \sigma_i X_i, Z_i\Big) \Big\}dW(t)+\sum_{l=1}^N\Re\Big(-\bi \sigma_i X_i,\mathbb Z_i\Big)dt\\
&=\sum_{i=1}^N\Re\Big(-\bi u_i^{\mathbb V^*}(\mathbb V_i-\mathbb V_i^*), Y_i\Big) dt+\sum_{i=1}^N 2\Re \Big(u_i^{\mathbb V^*}-Z_i^1,X_i\Big)dt
+\sum_{l=1}^N \Big\{\Re \Big(\mathbb Z_i(t), Y_i(t)\Big)\\
&\quad+\Re\Big(-\bi \sigma_i X_i, Z_i\Big) \Big\}dW(t).
\end{align*}
Taking expectation yields that 
\begin{align*}
&-\E[2\Re\<u^{\mathbb V}(T)-f^1(T),X(T)\>]=\E[ \Re \<X(T),Y(T)\>]-\E[ \Re \<X(0),Y(0)\>]\\
&= \int_0^T\E\Big[\Re\<\bi u^{\mathbb V^*}(\mathbb V-\mathbb V^*), Y\>+2\Re\<u^{\mathbb V^*}-Z^1,X(t)\> \Big]dt.
\end{align*}
By using \eqref{cond}, Proposition \ref{uni-est}, and Corollary  \ref{well-bsde},
we obtain 
\begin{align*}
0&\le \E\Big[ \Big\{\int_0^T\sum_{i=1}^N 2\Big((u_i^{\mathbb V^*}(t)-Z_i^1(t))\overline{X_i(t)} +(\mathbb V_i^*(t)-Z_i(t))({\mathbb V_i(t)-\mathbb V^*_i(t)})\Big)dt \\\nonumber
&\quad+\sum_{i=1}^N 2(u_i^{\mathbb V^*}(T)-f_i^1(T))\overline{X_i(T)} \Big\}\Big]\\
&=\int_0^T\E\Big[ -\Re\<\bi u^{\mathbb V^*} Y,\mathbb V-\mathbb V^*\>+2\Re\< \mathbb V^*-Z,\mathbb V-\mathbb V^*\> \Big]dt.
\end{align*}
Thus for arbitrary $\mathbb V$, we conclude that 
\begin{align*}
\Re\<-\bi u^{\mathbb V^*} Y+2(\mathbb V^*-Z),\mathbb V-\mathbb V^*\>\ge 0, \; \text{a.e.} \; t\in [0,T],\;\; \text{a.s.}
\end{align*}

\end{proof}

Theorem \ref{fbsde-op} can be also viewed as the Pontryagin's maximum principle. 
Based on the above theorem, we propose the corresponding forward-backward stochastic differential equation (FBSDE) for \eqref{ocp2},
\begin{align}\label{fbsde}
&\bi \frac {d u_j}{dt}=-\frac 12 (\Delta_G u)_j+u_j\mathbb V_j+u_j\sum_{l=1}^N\mathbb W_{jl}|u_l|^2+\sigma_ju_j\circ dW_t,\\\nonumber
&dY_i(t)=-\Big\{\frac {\bi}2 \sum_{i\in N(j)}\frac {\partial (\Delta_G u)_j}{\partial u_i}\Big|_{u=u^{\mathbb V}} Y_j-\bi \mathbb V_i Y_i-\bi \sum_{l=1}^N\mathbb W_{il}|u^{\mathbb V}_l|^2Y_i-2\bi \sum_{l=1}^N\mathbb W_{il}\Re (\bar u^{\mathbb V}_l Y_l)u^{\mathbb V}_i\Big\}dt\\\nonumber
&\qquad\qquad +\frac 12 \sigma_i^2 Y_i(t) dt+ \bi \sigma_i \mathbb Z_i dt + 2(u_i^{\mathbb V}-Z_i^1)dt +\mathbb Z_i(t) dW(t),\\\nonumber
&u(0)=\sqrt{\rho(0)}e^{\bi S(0)}, \; Y(T)=-2u^{\mathbb V^*}(T)+2f_1(T),\; \Re\<\bi u^{\mathbb V^*} Y+2(\mathbb V^*-Z),\mathbb V-\mathbb V*\>=0.\nonumber
\end{align}
If the control problem \eqref{ocp2} admits a unique optimal control, and the stochastic generalized FBSDE also admits a unique adapted solution $(u,Y,\mathbb Z),$ then $u$ is the optimal state process and the corresponding control $\mathbb V$ is optimal.  

We also present the Pontryagin's maximum principle for \eqref{ocp3} with the constraint \eqref{snls1} and the diffusion control $\sigma_i=\sigma_j$, $i,j\le N$.

\begin{tm}\label{fbsde-op1}
Let the condition of Proposition \ref{uni-est} hold. Then the optimal control pair $(u^{\sigma^*},\sigma^*)$ satisfies the generalized stochastic Hamiltonian system consisting of \eqref{snls1}, 
and 
\begin{align}\label{bsde1}
&dY_i(t)=-\Big\{\frac {\bi}2 \sum_{i\in N(j)}\overline{\frac {\partial (\Delta_G u)_j}{\partial u_i}\Big|_{u=u^{\sigma^*}}} Y_j-\bi \mathbb V_i Y_i-\bi \sum_{l=1}^N\mathbb W_{il}|u^{\sigma^*}_l|^2Y_i-2 \sum_{l=1 }^N\mathbb W_{il}\Re (\bi u^{\sigma^*}_l \bar  Y_l)u^{\sigma^*}_i\Big\}dt\\\nonumber
&\qquad\qquad +\frac 12 \sigma_i^2 Y_i(t) dt+ \bi \sigma_i \mathbb Z_i dt + 2(u_i^{\sigma^*}-Z_i^1)dt +\mathbb Z_i(t) dW(t),\\\nonumber
&Y(T)=-2u^{\sigma^*}(T)+2f_1(T).
\end{align}
with $u(0)=\sqrt{\rho(0)}e^{\bi S(0)}$, $Y(T)=-2u^{\sigma^*}(T)+2f_1(T)$
and the stationary condition 
\begin{align*}
\Re\<-\sigma u^{\sigma}  Y-\bi u^{\sigma} Z+2(\sigma-Z),\sigma-\sigma*\>\ge 0 \; \text{a.e.} \; t\in [0,T],\;\; \text{a.s.}
\end{align*}
\end{tm}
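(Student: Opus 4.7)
The plan is to mirror the argument that produced Theorem \ref{fbsde-op}, replacing drift perturbations by diffusion perturbations and using Proposition \ref{ne-cond1} (variational equation \eqref{var-eq1} together with variational inequality \eqref{cond1}) as the starting point. Before invoking It\^o calculus, I would first check that Proposition \ref{uni-est} and Corollary \ref{well-bsde} carry over verbatim to the present setting: the assumption $\sigma_i=\sigma_j$ is precisely the gauge condition that allows one to set $\widetilde S_i = S_i + \widetilde\sigma W(t)$, thereby turning \eqref{snls1} into a pathwise Hamiltonian system with preserved energy $\mathcal H(\rho,\widetilde S)$. The Fisher-information component of this energy then keeps $\rho^{\sigma^*}$ uniformly bounded away from $\partial\mathcal P(G)$ and makes the coefficients of both \eqref{var-eq1} and \eqref{bsde1} bounded and globally Lipschitz, which in turn upgrades \eqref{cond1} to a genuine pointwise inequality by permitting $\lim_{c\to\infty}$ to commute with the expectation.

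Next I would apply It\^o's product formula to $\Re\langle X(t),Y(t)\rangle$, where $X$ solves \eqref{var-eq1} and $(Y,\mathbb Z)$ solves \eqref{bsde1}. After converting the Stratonovich diffusion $-\bi\sigma_i^* X_i-\bi u_i^{\sigma^*}(\sigma_i-\sigma_i^*)$ into It\^o form, three types of drift terms appear in $d\,\Re\langle X,Y\rangle$: (i) adjoint pairings of the linearized Hamiltonian drift against $Y$, which cancel against the corresponding terms in \eqref{bsde1} by the construction of the BSDE, exactly as in the proof of Theorem \ref{fbsde-op}; (ii) the $\tfrac12\sigma_i^2 Y_i$ drift in \eqref{bsde1} cancels the Stratonovich-to-It\^o correction $-\tfrac12\sigma_i^{*2} X_i$ from \eqref{var-eq1}, and the $\bi\sigma_i\mathbb Z_i$ drift in \eqref{bsde1} cancels the quadratic-covariation between the $X$-diffusion and $\mathbb Z_i\,dW$; (iii) genuinely new contributions generated by the perturbation $-\bi u_i^{\sigma^*}(\sigma_i-\sigma_i^*)$, consisting of a direct pairing with $Y_i$, a Stratonovich cross correction $-\sigma_i^*(\sigma_i-\sigma_i^*)u_i^{\sigma^*}$ paired with $Y_i$, and an It\^o bracket with $\mathbb Z_i$ producing $-\bi(\sigma_i-\sigma_i^*)u_i^{\sigma^*}\mathbb Z_i$. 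Taking expectations, using $X(0)=0$ and $Y(T)=-2u^{\sigma^*}(T)+2f^1(T)$, one arrives at
\begin{align*}
-2\,\mathbb E\,\Re\langle u^{\sigma^*}(T)-f^1(T),X(T)\rangle
&=\int_0^T\mathbb E\,\Re\langle-\sigma u^{\sigma^*} Y-\bi u^{\sigma^*}\mathbb Z,\,\sigma-\sigma^*\rangle\,dt\\
&\quad+\int_0^T 2\,\mathbb E\,\Re\langle u^{\sigma^*}-Z^1,X\rangle\,dt.
\end{align*}

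Finally, I would substitute this identity into the variational inequality \eqref{cond1}, specialized to the quadratic cost of \eqref{ocp3} with $\gamma=\beta=\beta_1=1$. The terminal and running $X$-dependent integrals on the two sides cancel identically, leaving
\begin{equation*}
\mathbb E\int_0^T\Re\langle-\sigma u^{\sigma^*}Y-\bi u^{\sigma^*}\mathbb Z+2(\sigma^*-Z),\,\sigma-\sigma^*\rangle\,dt\ge 0
\end{equation*}
for every $\sigma\in\widetilde{\mathcal U}$. A standard localization argument, exploiting the convexity of $\widetilde{\mathcal U}$ by testing with $\sigma^*+\varepsilon\mathbf 1_A(\sigma-\sigma^*)$ for arbitrary measurable $A\subset\Omega\times[0,T]$, then upgrades this integral inequality to the pointwise stationary condition claimed in the theorem. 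The main obstacle is item (iii): unlike Theorem \ref{fbsde-op}, where the perturbation was purely in the drift and the It\^o-product bookkeeping reduced essentially to Hamiltonian adjointness, here the diffusion perturbation produces contributions in three different places (direct pairing, Stratonovich cross correction, and It\^o bracket with $\mathbb Z$). Verifying that they collect cleanly into the single combination $-\sigma u^{\sigma^*}Y-\bi u^{\sigma^*}\mathbb Z$ predicted by the theorem is the delicate algebraic step; once that identity is established, the rest of the proof parallels Theorem \ref{fbsde-op}.
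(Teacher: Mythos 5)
Your proposal follows essentially the same route as the paper's proof: the paper likewise establishes Theorem \ref{fbsde-op1} by applying It\^o's formula to $\Re\langle X(t),Y(t)\rangle$ with $X$ solving \eqref{var-eq1} and $(Y,\mathbb Z)$ solving \eqref{bsde1}, invoking Propositions \ref{uni-est} and \ref{ne-cond1}, and reducing the variational inequality \eqref{cond1} to the integral inequality $\int_0^T\E\big[\Re\langle-\sigma^* u^{\sigma^*}Y-\bi u^{\sigma^*}\mathbb Z,\sigma-\sigma^*\rangle+2\Re\langle\sigma^*-Z,\sigma-\sigma^*\rangle\big]dt\ge 0$ before passing to the pointwise stationary condition. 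The only difference is one of detail: you spell out the Stratonovich-to-It\^o bookkeeping for the diffusion perturbation (your item (iii)), which the paper compresses into ``using the similar steps in the proof of Theorem \ref{fbsde-op}.''
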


\begin{proof}
The proof is similar to that of Theorem \ref{fbsde-op}. By applying Propositions \ref{uni-est} and \ref{ne-cond1}, we can apply It\^o formula to 
$\Re\<X(t),Y(t)\>$. Using the similar steps in the proof of  Theorem \ref{fbsde-op} and utilizing  \eqref{cond1}, we can get that 
\begin{align*}
\int_0^T\E\Big[ \Re \<-\sigma^*u^{\sigma^*} Y -\bi u^{\sigma*} Z^*,\sigma-\sigma^*\>+2\Re\< \mathbb \sigma^*-Z,\sigma-\sigma^*\> \Big]dt\ge 0,
\end{align*} 
which completes the proof.
\end{proof}

In general, if the cost functional is \eqref{gen-1} or \eqref{ocp3}, analogous analysis leads to the following results.
\begin{tm}
Let the condition of Proposition \ref{uni-est} hold. There exists an adapted solution $(Y_i,\mathbb Z)$ of
\begin{align*}
&dY_i(t)=-\Big\{\frac {\bi}2 \sum_{i\in N(j)}\overline{\frac {\partial (\Delta_G u)_j}{\partial u_i}}\Big|_{u=u^{\mathbb V^*}} Y_j-\bi \mathbb V^*_i Y_i-\bi \sum_{l=1 }^N\mathbb W_{il}|u^{\mathbb V^*}_l|^2Y_i-2\sum_{l=1}^N\mathbb W_{il}\Re (\bi  u^{\mathbb V^*}_l  \bar Y_l)u^{\mathbb V^*}_i\Big\}dt\\\nonumber
&\qquad\qquad +\frac 12 \sigma_i^2 Y_i(t) dt+ \bi \sigma_i \mathbb Z_i dt + g_x(u^{\mathbb V^*},\mathbb V^*)dt +\mathbb Z_i(t) dW(t),\\\nonumber
&Y(T)=-h_x(u^{\mathbb V^*}(T))
\end{align*}
which correspond to the stochastic control problems with the cost \eqref{gen-1} 
such that the stationary condition 
\begin{align*}
\Re\<-\bi u^{\mathbb V^*} Y+g_{y}(u^{\mathbb V^*},\mathbb V^*),\mathbb V-\mathbb V^*\>\ge 0, \; \text{a.e.} \; t\in [0,T],\;\; \text{a.s.}
\end{align*}
hold.
For the stochastic control problem of \eqref{ocp3}, there exists an adapted solution   $(Y,\mathbb Z)$ of 
\begin{align*}
&dY_i(t)=-\Big\{\frac {\bi}2 \sum_{i\in N(j)}\overline{\frac {\partial (\Delta_G u)_j}{\partial u_i}\Big|_{u=u^{\sigma^*}}} Y_j-\bi \mathbb V_i Y_i-\bi \sum_{l=1}^N\mathbb W_{il}|u^{\sigma^*}_l|^2Y_i-2 \sum_{l=1}^N\mathbb W_{il}\Re (\bi u^{\sigma^*}_l \bar  Y_l)u^{\sigma^*}_i\Big\}dt\\\nonumber
&\qquad\qquad +\frac 12 \sigma_i^2 Y_i(t) dt+ \bi \sigma_i \mathbb Z_i dt + g_x(u^{\sigma^*},\sigma^*)dt +\mathbb Z_i(t) dW(t),\\\nonumber
&Y(T)=-h_x(u^{\sigma^*}(T)).
\end{align*}
such that the stationary condition
 \begin{align*}
\Re\<-\sigma u^{\sigma}  Y-\bi u^{\sigma} \mathbb Z+g_{y}(u^{\sigma},\sigma),\sigma-\sigma^*\>\ge 0\; \text{a.e.} \; t\in [0,T],\;\; \text{a.s.}
\end{align*}
hold.
\end{tm}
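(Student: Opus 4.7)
The theorem is a direct extension of Theorems \ref{fbsde-op} and \ref{fbsde-op1} from the quadratic cost \eqref{ocp2} to the general convex cost \eqref{gen-1} (and the corresponding diffusion-control cost \eqref{ocp3}). The plan is to combine (i) the variational inequality already furnished by Propositions \ref{prop-gen1} and \ref{ne-cond1}, (ii) the uniform density lower bound and moment estimates of Proposition \ref{uni-est}, which rely on the assumption $\sigma_i=\sigma_j$, and (iii) an It\^o duality computation between the forward variational equation and the adjoint BSDE, paralleling the cancellation exhibited in the proof of Theorem \ref{fbsde-op}.

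First I would establish well-posedness of each adjoint BSDE. Since Proposition \ref{uni-est} provides an almost-sure positive lower bound for $\rho^{\mathbb V^*}$ (resp.\ $\rho^{\sigma^*}$), the coefficients $\overline{\partial (\Delta_G u)_j/\partial u_i}|_{u=u^{\mathbb V^*}}$, $\mathbb W_{il}|u^{\mathbb V^*}_l|^2$, and $u^{\mathbb V^*}_i$ are uniformly bounded and Lipschitz in $(Y,\mathbb Z)$. Together with the assumption that $g_x$ and $h_x$ are bounded, standard linear BSDE theory (as used in Corollary \ref{well-bsde}) yields a unique adapted square-integrable solution $(Y,\mathbb Z)$, and similarly for the diffusion-control BSDE.

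Next I would apply the It\^o product formula to $\Re\<X(t),Y(t)\>$, where $X$ solves the variational equation \eqref{var-eq} with perturbation $\mathbb V-\mathbb V^*$ (respectively \eqref{var-eq1} with $\sigma-\sigma^*$). The backward drift is designed to be the formal real-inner-product adjoint of the forward linearized generator; the Stratonovich-to-It\^o conversion produces a $-\frac12\sigma_i^2X_i$ term in \eqref{var-eq} which is matched by $+\frac12\sigma_i^2Y_i$ in the BSDE, and the $\bi\sigma_i\mathbb Z_i$ drift exactly offsets the cross variation between $-\bi\sigma_iX_i\,dW$ and $\mathbb Z_i\,dW$. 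After taking expectation all bilinear-in-$(X,Y)$ terms vanish and only the forcing and boundary contributions survive, giving
\begin{align*}
-\E\bigl[\Re\<X(T),h_x(u^{\mathbb V^*}(T))\>\bigr]
=\int_0^T\E\Bigl[\Re\<g_x(u^{\mathbb V^*},\mathbb V^*),X\>-\Re\<\bi u^{\mathbb V^*}(\mathbb V-\mathbb V^*),Y\>\Bigr]dt.
\end{align*}
Substituting this identity into the variational inequality of Proposition \ref{prop-gen1} cancels the $X$-dependent terms and produces the stationary condition stated in the theorem.

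The diffusion-control case is handled identically using Proposition \ref{ne-cond1}; the extra term $-\bi u^{\sigma^*}_i(\sigma_i-\sigma^*_i)\circ dW$ in \eqref{var-eq1} yields, via the cross variation with $\mathbb Z_i\,dW$, the additional $-\Re\<\bi u^{\sigma^*}\mathbb Z,\sigma-\sigma^*\>$ contribution in the final inequality. The main technical obstacle I expect is the bookkeeping of the nonlinear Laplacian derivatives: one must verify the pointwise cancellation of $\sum_i\Re\bigl(\tfrac{\bi}{2}\sum_{j\in N(i)}\tfrac{\partial(\Delta_G u)_i}{\partial u_j}X_j\,\overline{Y_i}\bigr)$ against its adjoint counterpart $\sum_i\Re\bigl(-\tfrac{\bi}{2}\sum_{i\in N(j)}\overline{\tfrac{\partial(\Delta_G u)_j}{\partial u_i}}Y_j\,\overline{X_i}\bigr)$, which reduces to the Hermitian symmetry of the Wasserstein Hessian on the density manifold already used in the proof of Theorem \ref{fbsde-op}; the remaining terms involving $\mathbb V^*_iX_i$, $\mathbb W_{il}|u^{\mathbb V^*}_l|^2X_i$, and the cubic term in $\mathbb W_{il}\Re(\bar u^{\mathbb V^*}_lX_l)u^{\mathbb V^*}_i$ cancel by direct inspection because they enter the two equations with opposite imaginary signs.
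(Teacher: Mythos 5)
Your proposal is correct and follows essentially the same route the paper intends: the paper gives no separate proof for this theorem, stating only that ``analogous analysis'' to Theorems \ref{fbsde-op} and \ref{fbsde-op1} applies, and your combination of Propositions \ref{prop-gen1}, \ref{ne-cond1}, \ref{uni-est}, the BSDE well-posedness argument of Corollary \ref{well-bsde}, and the It\^o duality between the variational equation and the adjoint equation is precisely that analogous analysis.
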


It can be seen that if the $V^*$ (or $\sigma^*$) is achieved in the interior of $\mathcal U$ (or $\widetilde{\mathcal U}$), then the stationary condition could be simplified to an equality.

\section{Conclusion}
In this paper, we propose the stochastic control problem subject to stochastic nonlinear Schr\"odinger equation on graph with either a linear potential or diffusion control. From the numerical viewpoint, we demonstrate that the particular features such as the stochastic dispersion relationship, mass conservation law, moment bounds of energy of stochastic nonlinear Schr\"odinger on graph. Furthermore, we provide the gradient formula and the Pontryagin's maximum principle for stochastic nonlinear Schr\"odinger equation on graph driven by multiplicative noise. 
These may serve as a foundation of the  numerical computation for stochastic control of stochastic nonlinear Schr\"odinger equation in a continuous domain as well (see, e.g., \cite{CDZ21}). 

There are many interesting questions that remain to be tackled. 
For instance, it will be more difficult to investiagete the nonlinear potential and diffusion controls of the stochastic nonlinear Schr\"odinger equation driven by general multiplicative noise. 
Given the solutions of the FBSDEs, can this stationary condition uniquely determine the optimal control for stochastic nonlinear Schr\"odinger equation on graph? 
The stochastic control problem over density manifold, such as the mean-field game involved with the Fisher information or non-monotone coefficient, is challenging. Besides, the numerical computation has not been
addressed in the current work. We plan to explore these issues in the future work.

\bibliographystyle{plain}
\bibliography{bib}

\end{document}